\documentclass[10pt,twocolumn,letterpaper]{article}
\setlength{\columnsep}{0.5cm}
\usepackage{authblk}
\usepackage{multirow}
\usepackage{amsfonts}
\usepackage{amsmath,amssymb,mathrsfs,amsthm}
\usepackage{color}

\usepackage{hyperref}
\usepackage{epsfig}
\usepackage{epstopdf}
\usepackage{array,booktabs}
\usepackage{tabularx}
\usepackage{algorithm}
\usepackage[noend]{algcompatible}

\usepackage{comment}
\usepackage{bm}
\usepackage{graphicx}
\usepackage{subcaption}


\newtheorem{lemma}{Lemma}
\newtheorem{theorem}{Theorem}
\newtheorem{remark}{Remark}

\newtheorem{assumption}{Assumption}
\newtheorem{corollary}{Corollary}

\begin{document}

\title{Progressive Power Homotopy for Non-convex Optimization}

\author[1]{Chen Xu}
\affil[1]{Department of Engineering, Shenzhen MSU-BIT University, China. 
\authorcr Email: xuchen@smbu.edu.cn
}

\date{}

\maketitle

\begin{abstract}
We propose a novel first-order method for nonconvex optimization of the form $\max_{\bm{w}\in\mathbb{R}^d}\mathbb{E}_{\bm{x}\sim\mathcal{D}}[f_{\bm{w}}(\bm{x})]$, termed Progressive Power Homotopy (Prog-PowerHP). The method applies stochastic gradient ascent to a surrogate objective obtained by first performing a power transformation and then Gaussian smoothing, $F_{N,\sigma}(\bm{\mu}):=\mathbb{E}_{\bm{w}\sim\mathcal{N}(\bm{\mu},\sigma^2I_d),\bm{x}\sim\mathcal{D}}[e^{Nf_w(\bm{x})}]$, while progressively increasing the power parameter $N$ and decreasing the smoothing scale $\sigma$ along the optimization trajectory. We prove that, under mild regularity conditions, Prog-PowerHP converges to a small neighborhood of the global optimum with an iteration complexity scaling nearly as $O(d^2\varepsilon^{-2})$. Empirically, Prog-PowerHP demonstrates clear advantages in phase retrieval when the samples-to-dimension ratio approaches the information-theoretic limit, and in training two-layer neural networks in under-parameterized regimes. These results suggest that Prog-PowerHP is particularly effective for navigating cluttered nonconvex landscapes where standard first-order methods struggle.

\end{abstract}

\begin{keywords}
Power-transform. Gaussion Smoothing, Homotopy for Optimizations, Stochastic Gradient Optimization
\end{keywords}

\section{Introduction}
In this work we aim to solve the following stochastic optimization problem
\begin{equation}
\label{stochastic-objective}
\max_{\bm{w}\in\mathbb{R}^d} G(\bm{w}):= \mathbb{E}_{\bm{x}\sim \mathcal{D}}[f_{\bm{w}}(\bm{x})],
\end{equation}
where $\mathcal{D}$ is a known distribution, $f$ is a known fitness function with accessible derivatives, and the objective $G$ has a unique global maximum point $\bm{w}^*$ and possibly multiple local maximums. If $\bm{x}$ represents data and $f_{\bm{w}}$ equals negative of the individual loss function associated with a model with $\bm{w}$ being parameters, then (\ref{stochastic-objective}) becomes the loss-minimization problem for model training in machine learning. The most popular methods for the loss-minimization problem are sample-based gradient methods, such as the full-batch gradient descent (GD), stochastic gradient ascent (SGD, \cite{bottou2010large}), and Adaptive Moment Estimation (Adam, \cite{kingma2017adam}). While the full-batch GD is guaranteed to approach the stationary points of any sample estimate of $G(\bm{w})$ under an appropriate learning schedule, it may get stuck near shallow local minima and saddle points. The randomness introduced by mini-batch selection helps SGD mitigate this issue and boosts performances. While the performance of SGD significantly depends on the learning rate schedule, Adam is less susceptible and converges much faster. However, these optimizers are still often trapped near local optimums. Hence, the optimization problem of locating the global optimum still remains open, which is crucial for training under-parameterized models.

A well-accepted explanation on SGD's ability to escape from narrow local minimums is that the mini-batch selection randomness makes SGD actually optimize the smoothed objective $\mathbb{E}_{\bm{w}}[G(\bm{w})]$ of $G(\bm{w})$, which eliminates the narrow local minimums \cite{Kleinberg2018}. This relates to the smoothing technique (\cite{Mobahi2012}) used by another type of popular derivative-free non-convex optimization methods---homotopy for optimization (e.g., \cite{MobahiFisher2015, Hazan2016}). Its standard version solves the deterministic problem $\min_{\bm{w}}h(\bm{w})$ by minimizing the Gaussian-smoothed surrogate $H_{\sigma}(\bm{\mu}):=\mathbb{E}_{\bm{\varepsilon}\sim\mathcal{N}(\bm{0}, I_d)}[h(\mu+\sigma \bm{\varepsilon})]$. Specifically, it incrementally decreases $\sigma$ in an outer loop, and optimizes the surrogate in the inner loop under the current $\sigma$ value, taking the solution found in the previous inner loop as the initial candidate for the current inner loop. Homotopy works because the smoothing technique removes sharp local minimums and the decreasing-$\sigma$ mechanism drags the global optimum of the surrogate towards that of the original objective $h(\bm{w})$ under certain conditions (\cite{Hazan2016}). 

Traditional homotopy methods are often hindered by the inefficiency of their double-loop structure and a notorious sensitivity to the $\sigma$-decay schedule, which is notoriously difficult to tune. To address these challenges, recent research has pivoted toward more time-efficient single-loop mechanisms and strategies to minimize the displacement of the optimum caused by the smoothing process. For instance, the Single-Loop Gaussian Homotopy (SLGH) algorithm \cite{Iwakiri2022} introduces an automated $\sigma$-decay mechanism. GS-PowerOpt \cite{GS-PowerOpt} utilizes a power transformation prior to Gaussian smoothing to effectively reduce the distance between the pre- and post-smoothed optima, while GS-PowerHP \cite{GSPowerHP} integrates both the power transformation and a $\sigma$-decaying schedule. Despite these advancements, significant gaps remain for solving (\ref{stochastic-objective}): SLGH lacks guarantees beyond convergence to a standard local optimum, while GS-PowerOpt and GS-PowerHP are restricted to zeroth-order, deterministic settings.

In this paper, we introduce Progressive Power Homotopy (Prog-PowerHP), a novel smoothing-based framework designed to solve the stochastic optimization problem defined in (\ref{stochastic-objective}). It offers several critical advantages over existing optimization frameworks. Compared with the standard first-order methods of SGD and Adam and the homotopy method of SLGH, which are frequently susceptible to local optima, our method provides a rigorous theoretical pathway to approximating the global optimum under mild conditions (see Corollary \ref{wrap-up}). By utilizing a single-loop structure, it bypasses the excessive computational overhead and complex sub-problem convergence requirements of traditional double-loop homotopy. Compared with GS-PowerOpt and GS-PowerHP, our algorithm is designed for the stochastic optimization problem and adopts a gradually increasing power for better performance. A key innovation of our approach is the progressive power mechanism, which dynamically increases the power transformation as the smoothing parameter diminishes, effectively refining the search space and prioritizing deeper basins of attraction as the landscape's fine-grained features emerge.\\

\noindent\textbf{Contribution.} The primary contributions of this work are three-fold. To the best of our knowledge, this is the first work to combine homotopy continuation with a progressive power-transformation of the objective for stochastic first-order optimization. Second, we establish the theoretical foundations of this approach by providing a comprehensive convergence analysis for Prog-PowerHP in the stochastic setting. Third, through extensive empirical evaluation on two-layer ReLU networks and phase retrieval, we demonstrate the algorithm's superior landscape navigation capabilities. Our results show that in under-parameterized and limited-sample regimes—where standard optimizers consistently settle into higher-loss spurious minima—Prog-PowerHP achieves significantly higher approximation efficiency by locating deeper, more optimal local basins.\\

\noindent\textbf{Related Work.} There are existing studies that also exponentially transform the loss. For example, in tilted empirical risk minimization, this technique is used to serve multiple purposes, such as to decrease the influence of sample outliers and to improve model generalizations (\cite{LiBeirami2023,aminian2025}). In the field of safe reinforcement learning, this technique is used to emphasize the worst-case scenarios (\cite{Dvijotham2014,fei2020risk,fei2021exponential}). The main difference between these works and ours is, they treat the transformed objective as a risk-sensitive criterion, intentionally redefining optimality to emphasize tail performance, robustness, or safety. In contrast, our work uses a Gaussian-smoothed power-transformed objective purely as an optimization surrogate, designed to preserve the maximizer of the original fitness while improving concentration and optimization geometry.
 
Our method is also fundamentally distinct from existing homotopy-based approaches. Unlike gradient-based standard homotopy methods \cite{Hazan2016} and SLGH \cite{Iwakiri2022}, our approach employs a power-transformation mechanism. Moreover, in contrast to GS-PowerOpt \cite{GS-PowerOpt}, GS-PowerHP \cite{GS-PowerHP}, and the path-learning homotopy method \cite{Lin2023}—all of which are developed for deterministic optimization—our framework is tailored to stochastic first-order optimization, where gradient noise and limited sample access fundamentally alter the optimization dynamics.

\section{Prog-PowerHP: The Proposed Method}
\subsection{Motivations}
The design of our algorithm is motivated by a simple geometric principle: emphasize dominant extrema while suppressing spurious ones. Specifically, the power transformation amplifies higher local maxima, making them more prominent relative to lower ones. When combined with Gaussian smoothing, this effect is preserved for dominant peaks, while smaller, spurious local maxima are attenuated or eliminated by the smoothing operation. Specifically, we construct the following surrogate objective
\begin{equation}
\label{surrogate}
\max_{\bm{\mu}\in\mathbb{R}^d} F_{N,\sigma}(\bm{\mu}):=\mathbb{E}_{\bm{x}\sim \mathcal{D},\bm{w}\sim\mathcal{N}(\bm{\mu},\sigma^2I_d)}[e^{Nf_{\bm{w}}(\bm{x})}].
\end{equation}

During optimization, we adopt a progressive homotopy strategy. The power parameter $N$ is gradually increased so as to align the global optimum of the smoothed objective with that of the original, un-smoothed problem. Simultaneously, the smoothing scale $\sigma$ is progressively decreased to maintain sufficient curvature for fast solution improvement. The effects of these two mechanism are illustrated with a simple example in Figure \ref{N-sigma-effect}. Specifically, (a) shows that a larger $N$ aligns $\bm{w}^*=\arg\max_{\bm{w}}G(\bm{w})$ and $\bm{\mu}^*=\arg\max_{\bm{\mu}}F_{N,\sigma}(\bm{\mu})$ better, and (b) shows that a larger $\sigma$ leads to larger slopes when $\bm{\mu}$ is away from $\bm{w}^*$ and a smaller $\sigma$ leads to larger slopes when $\bm{\mu}$ is near $\bm{w}^*$.

\begin{figure}[ht]
      \begin{tabular}{cc}
        \subfloat[Increase $N$.]{\includegraphics[scale=0.34]{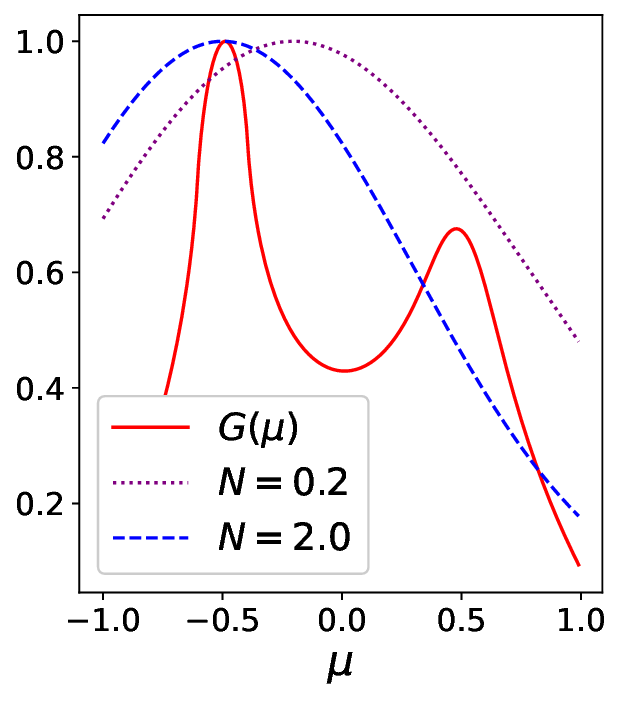} }
        &
        \subfloat[Decay $\sigma$.]{\includegraphics[scale=0.34]{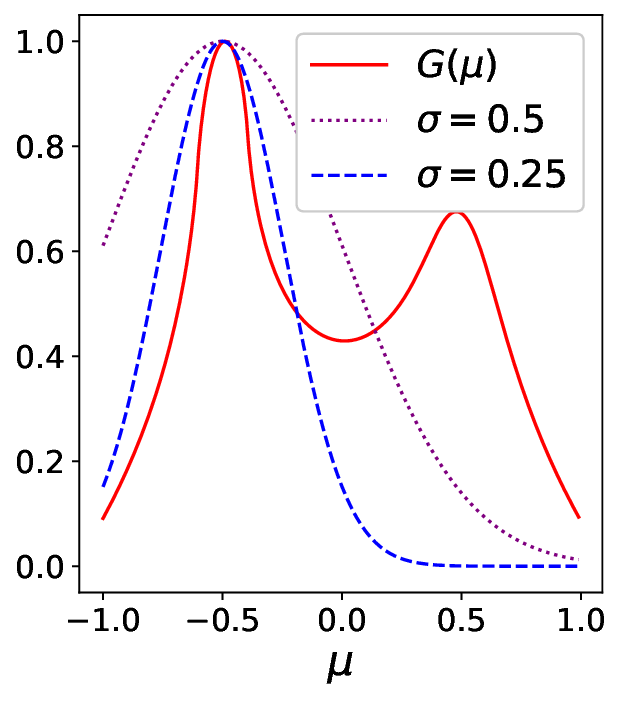} }
\end{tabular}
       	\caption{Graph of approximated $G(\mu)$ and $F_{N,\sigma}(\mu)$. (a) $N$ is varied while $\sigma\equiv 0.8$. (b) $\sigma$ is varied while $N\equiv 2.0$. Here, $G(\mu)=\mathbb{E}_{\epsilon\sim I[-0.1,0.1]}[f(\mu+\epsilon)]$, $G_N(\mu)=\mathbb{E}_{\epsilon\sim I[-0.1,0.1]}[e^{Nf(\mu+\epsilon)}]$, $F_{N,\sigma}(\mu)=\mathbb{E}_{x\sim\mathcal{N}(\mu,\sigma^2)}[G_N(\mu)]$, and $f(\mu)=-\log((\mu+0.5)^2+10^{-5})-\log((\mu-0.5)^2+10^{-2})+10$ for $|\mu|\leq 1$ and $f(\mu)=0$ for $|\mu|>1.$  All graphs are scaled to have a maximum of 1 for easier comparisons.} 
\label{N-sigma-effect}
\end{figure}


\subsection{Gradients for Solving (\ref{surrogate})}
\label{gradient}
Suppose the first-order gradient of $f$ is accessible. It can be derived that:
\begin{equation}
\label{F-gradient}
\nabla_{\bm{\mu}} F_{N,\sigma}(\bm{\mu}) =  \mathbb{E}_{\bm{w}\sim \mathcal{N}(\bm{\mu}, \sigma^2I_d),\bm{x}\sim \mathcal{D}}\left[\frac{\partial e^{Nf_{\bm{w}}(\bm{x})}}{\partial \bm{w}}\right].
\end{equation}
Therefore, an un-biased sample estimate for $\nabla F_{N,\sigma}(\bm{\mu})$ is
\begin{equation}
\label{F-gradient}
\hat{\nabla} F_{N,\sigma}(\bm{\mu}) = \frac{1}{KJ} \sum_{k=1}^K\sum_{j=1}^J \left.\frac{ \partial e^{Nf_{\bm{w}}(\bm{x}_j)}}{\partial \bm{w}}\right|_{\bm{w}=\bm{w}_k},
\end{equation}
where $\{\bm{x}_j\}_{j=1}^J$ is a batch of independently independently distributed (i.i.d.) samples drawn from $\mathcal{D}$, and $\{\bm{w}_k\}_{k=1}^K$ is a batch of i.i.d. samples from $\mathcal{N}(\bm{\mu},\sigma^2I_d)$.

\subsection{Our Algorithm}
Our algorithm follows the step of stochastic gradient ascent, with an adaptive power $N$ and an adaptive $\sigma$. Specifically, the updating equations at step $t$ are as follows.
\begin{equation}
\begin{split}
\label{update-rule}
&N_t = N_{t-1} +\phi_t\Delta;\\
&\sigma_t = b + \sigma_{0} \beta^t;\\
&\bm{\mu}_{t} = \bm{\mu}_{t-1} + \alpha_t \hat{\nabla}F_{N_{t},\sigma_{t}}(\bm{\mu}_{t-1}),\\
\end{split}
\end{equation}
where $\Delta$ denotes the pre-slected total increment of $N$ in the updating process, $\beta\in (0,1)$ is the decay factor for $\sigma$, $\phi_t$ is the increasing rate for $N$, and $\alpha_t$ is the learning rate for $\bm{\mu}$. At the end of step $T$, we output $\bm{\mu}^* = \arg\max_{\bm{\mu} \in \{\bm{\mu}_1, \dots, \bm{\mu}_T\}} \hat{G}(\bm{\mu})$, where $\hat{G}(\bm{\mu}):=\sum_{j=1}^{J}f_{\bm{\mu}_t}(\mathbf{x}_j)/J$ and $\{\mathbf{x}_j\}$ is a set of validation samples drawn from $\mathcal{D}$. The complete algorithm is in Algorithm \ref{alg:Prog-PowerHP}.

\begin{algorithm}[t]
\caption{Progressive Power Homotopy (Prog-PowerHP) for Solving (\ref{stochastic-objective}).}
\label{alg:Prog-PowerHP}
\begin{algorithmic}[1]
\REQUIRE Initial power $N_0 >0$, total power increment $\Delta>0$, initial smoothing radius $\sigma_0 > 0$, the minimum smoothing radius $b>0$, decay rate $\beta \in (0,1)$, objective $f$, initial solution candidate $\bm{\mu}_0 \in \mathbb{R}^d$, $\bm{x}$ batch number $J$, $\bm{w}$ population size iteration $K$, total number $T$ of iterations, learning rates $\{\alpha_t\}_{t=1}^{T}$ and $\{\phi_t\}_{t=1}^{T}$.
\FOR{$t = 1$ to $T$}
    \STATE $N_t = N_{t-1} + \phi_t\Delta$;
    \STATE $\sigma_{t} = \sigma_0 \beta^{t}+b$;
    \STATE Sample i.i.d. $\{\bm{w}_k\}_{k=1}^K$ from $\mathcal{N}(\bm{\mu}_{t-1}, \sigma_{t}^2 I_d)$;
    \STATE Sample i.i.d. $\{\bm{x}_j\}_{j=1}^J$ from $\mathcal{D}$;
    \STATE Compute the gradient estimator:
    \[
    \hat{\nabla}F_{N,\sigma}(\bm{\mu}_{t-1}) = \frac{1}{KJ} \sum_{k=1}^K\sum_{j=1}^J \left.\frac{ \partial e^{Nf_{\bm{w}}(\bm{x}_j)}}{\partial \bm{w}}\right|_{\bm{w}=\bm{w}_k}.
    \]
    \STATE Update:
    \[
    \bm{\mu}_{t} = \bm{\mu}_{t-1} + \alpha_t  \hat{\nabla} F_{N_t,\sigma_t}(\bm{\mu}_{t-1}).
    \]
\ENDFOR
\STATE \textbf{Return} $\bm{\mu}^* = \arg\max_{\bm{\mu} \in \{\bm{\mu}_1, \dots, \bm{\mu}_T\}} \hat{G}(\bm{\mu})$, where $\hat{G}(\bm{\mu})$ is the validation performance defined below (\ref{update-rule}).
\end{algorithmic}
\end{algorithm}

\section{Convergence Analysis}
In this section, we prove that, under mild assumptions, Prog-PowerHP generates a sequence $\{\bm{\mu}_t\}$ that converges in expectation to a point within an arbitrarily small neighborhood of the globally optimal solution $\bm{w}^*$ of (\ref{stochastic-objective}), as long as the total power increment $\Delta$ is sufficiently large. The proof is divided to two parts. The first one (Section \ref{converge-to-stationary}) shows that $\{\bm{\mu}_t\}$ from (\ref{update-rule}) contains a candidate that converges in expectation to the stationary points of $F_{N_0+\Delta,b}$ (see Remark \ref{explain-converge}), while the second part (Section \ref{converge-to-neighborhood}) shows that, for any $\delta>0$, there exists a threshold $N_{\delta}$ such that for all $N_0+\Delta>N_{\delta}$, every stationary point of $F_{N_0+\Delta,b}$ lie within a $\delta$-neighborhood of $\bm{w}^*$. These results are summarized in Corollary \ref{wrap-up}.

\subsection{Convergence in Expectation to Stationary Points of $F_{N_0+\Delta,b}$}
\label{converge-to-stationary}
We derive the properties of $F_{N,\sigma}$ in Lemma \ref{lr-assumption} -- \ref{nablaF-bound}, which are used to derive a bound of $\sum_{t=1}^{T}\alpha_t \mathbb{E}[\|\nabla F_{N_{t},\sigma_{t}}(\bm{\mu}_{t-1})\|^2]$ in Theorem \ref{main-theorem}. This bound is used by Corollary \ref{conv-rate} to derive an iteration complexity for $\lim_{T\rightarrow \infty}\min_{t\in\{1,...,T\}}\mathbb{E}[\|\nabla F_{N_{t},\sigma_{t}}(\bm{\mu}_{t-1})\|^2]=0$.

\begin{assumption}
\label{lr-assumption}
The learning rates $\phi_t$ and $\alpha_t$ satisfy
\begin{enumerate}
\item $\phi_t>0$ and $\sum_{t=1}^\infty \phi_t = 1$;
\item $\alpha_t>0$, $\sum_{t=1}^\infty \alpha_t=\infty$, and $\sum_{t=1}^\infty \alpha^2<\infty$.
\end{enumerate}
\end{assumption}

\begin{assumption}
\label{f-bound}
The maximization objective $f_{\bm{w}}(\bm{x}): \mathbb{R}^d\times \mathbb{R}^p \rightarrow \mathbb{R}$ is differentiable and Lipschitz with respect to $\bm{w}$: $|f_{\bm{w}_1}(\bm{x})-f_{\bm{w}_2}(\bm{x})|\leq L_{0f}\|\bm{w}_1-\bm{w}_2\|$ for any $\bm{w}_1$, $\bm{w}_2\in\mathbb{R}^d$, and $\bm{x}\in\mathbb{R}^p$, and has an upper bound $M_f$. Here, $L_{0f}>0$ denotes the Lipschitz constant.
\end{assumption}

\begin{assumption}
\label{Lipschitz-smooth}
The maximization objective $f_{\bm{w}}(\bm{x})$ is Lipschitz smooth in terms of $\bm{w}$. That is, there exists a constant $L_{1f}>0$ such that $\|\nabla_{\bm{w}} f_{\bm{w}_1}(\bm{x})-\nabla_{\bm{w}}f_{\bm{w}_2}(\bm{x})\| \leq L_{1f} \|\bm{w}_1-\bm{w}_2\|$ for any $\bm{w}_1, \bm{w}_2\in\mathbb{R}^d$, and any $\bm{x}$ that belongs to the sample space determined by $\mathcal{D}$.
\end{assumption}
The assumption on the learning rate $\alpha_t$ is the classic Robbins–Monro stochastic approximation conditions (e.g., \cite[Eq. (2.2.14)]{benveniste2012}), and it appears in a wide range of convergence analyses for stochastic gradient–based algorithms. For example, \cite{WangGurbuzbalaban2021} considers learning rates satisfying $\lim_{t\rightarrow \infty} \alpha_t/t^{-\rho} \in (0,\infty)$ with $\rho\in (0,1),$ which satisfies the Robbins–Monro conditions if $\rho\in(1/2,1)$. Similarly, \cite{mertikopoulos2020} established almost-sure convergence results in non-convex settings under the same step-size conditions.

The $f$-boundedness, $\bm{x}$-uniform Lipschitz continuity, and $\bm{x}$-uniform Lipschitz smoothness assumptions stated in Assumption~\ref{f-bound} and Assumption~\ref{Lipschitz-smooth} are also common in optimization for machine learning. Such regularity conditions are routinely adopted to control the sensitivity of the objective function with respect to the model parameters and to enable stability analyses. Representative examples include \cite{Iwakiri2022}, \cite{Aydore2019}, and \cite{Cutkosky2019}. In fact, Lipschitz continuity and Lipschitz smoothness are standard assumptions in theoretical works on non-convex optimization for machine learning models (\cite{HuWu2023,liQian2023}).

\begin{lemma}
\label{well-defined}
Under Assumption \ref{f-bound}, given any $N>0$ and $\sigma>0$, both $F_{N,\sigma}(\bm{\mu})$ and $\nabla_{\bm{\mu}}F_{N,\sigma}(\bm{\mu})$ are well-defined for all $\bm{\mu}\in\mathbb{R}^d$.
\end{lemma}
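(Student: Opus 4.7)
The plan is to handle the two claims in turn. For $F_{N,\sigma}$, the key observation is that Assumption~\ref{f-bound} gives $f_{\bm{w}}(\bm{x})\leq M_f$ uniformly, so the integrand $e^{Nf_{\bm{w}}(\bm{x})}$ is a non-negative random variable bounded above by the finite constant $e^{NM_f}$. The double expectation then exists and lies in $[0,e^{NM_f}]$ for every $\bm{\mu}\in\mathbb{R}^d$. This part requires essentially nothing beyond reading off the uniform upper bound, since constants are trivially integrable against any probability measure.

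For the gradient, I would first apply the reparameterization $\bm{w}=\bm{\mu}+\sigma\bm{\epsilon}$ with $\bm{\epsilon}\sim\mathcal{N}(\bm{0},I_d)$ to rewrite $F_{N,\sigma}(\bm{\mu})=\mathbb{E}_{\bm{\epsilon},\bm{x}}[e^{Nf_{\bm{\mu}+\sigma\bm{\epsilon}}(\bm{x})}]$. This pulls all the $\bm{\mu}$-dependence into the integrand and eliminates the $\bm{\mu}$-dependent measure, so that the pointwise $\bm{\mu}$-derivative is well defined and equals $N\,e^{Nf_{\bm{\mu}+\sigma\bm{\epsilon}}(\bm{x})}\,\nabla_{\bm{w}}f_{\bm{w}}(\bm{x})\big|_{\bm{w}=\bm{\mu}+\sigma\bm{\epsilon}}$. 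The remaining task is the classical interchange of differentiation and expectation, for which I need an integrable dominator valid on a neighborhood of every $\bm{\mu}$.

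The dominator comes from combining the two halves of Assumption~\ref{f-bound}. Differentiability together with the $L_{0f}$-Lipschitz property in $\bm{w}$ forces $\|\nabla_{\bm{w}}f_{\bm{w}}(\bm{x})\|\leq L_{0f}$ for all $(\bm{w},\bm{x})$, and $f\leq M_f$ gives $e^{Nf_{\bm{w}}(\bm{x})}\leq e^{NM_f}$. The product is dominated by the finite constant $NL_{0f}e^{NM_f}$, which is trivially integrable against the product measure of $\bm{\epsilon}$ and $\bm{x}$. Dominated convergence applied to coordinate-wise difference quotients in $\bm{\mu}$ then yields the Leibniz rule and the gradient formula stated in~(\ref{F-gradient}). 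Because the dominator is a constant independent of $\bm{\mu}$, the argument applies at every $\bm{\mu}\in\mathbb{R}^d$, establishing the second claim.

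The only conceptual step here is the differentiation-under-the-integral interchange, but the uniform constants supplied by Assumption~\ref{f-bound} reduce it to a textbook application of dominated convergence rather than a genuine obstacle. I anticipate no further subtlety, and the same bounds will in fact be reused in the subsequent lemmas to control $\|\nabla F_{N,\sigma}\|$ quantitatively.
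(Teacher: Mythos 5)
Your proposal is correct and follows essentially the same route as the paper: bound $e^{Nf_{\bm{w}}(\bm{x})}$ by $e^{NM_f}$ for the first claim, and combine that bound with $\|\nabla_{\bm{w}}f_{\bm{w}}(\bm{x})\|\leq L_{0f}$ (forced by Lipschitz continuity) to obtain $\|\nabla F_{N,\sigma}(\bm{\mu})\|\leq N e^{NM_f}L_{0f}<\infty$ for the second. The only difference is that you explicitly justify the differentiation-under-the-integral step via the reparameterization $\bm{w}=\bm{\mu}+\sigma\bm{\epsilon}$ and dominated convergence with the constant dominator $NL_{0f}e^{NM_f}$, whereas the paper simply invokes the gradient formula~(\ref{F-gradient}) as given and bounds its norm; your extra step is a genuine (if minor) improvement in rigor but relies on the same uniform bounds.
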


\begin{lemma}
\label{F-smooth}
Under Assumption \ref{f-bound} and \ref{Lipschitz-smooth}, for any $\bm{\mu}_1$, $\bm{\mu}_2\in\mathbb{R}^d$, $\| \nabla F_{N,\sigma}(\bm{\mu}_1) -  \nabla F_{N,\sigma}(\bm{\mu}_2) \|\leq L_F\|\bm{\mu}_1-\bm{\mu}_2\|$, where $L_F=e^{NM_f}N(NL_{0f}^2+L_{1f})$.
\end{lemma}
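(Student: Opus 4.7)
The plan is to use the reparameterization $\bm{w} = \bm{\mu} + \sigma\bm{\varepsilon}$ with $\bm{\varepsilon}\sim\mathcal{N}(0,I_d)$ so that the dependence on $\bm{\mu}$ is moved out of the sampling distribution. Under this change of variable, the gradient identity from Section~\ref{gradient} reads
\begin{equation*}
\nabla F_{N,\sigma}(\bm{\mu}) = \mathbb{E}_{\bm{\varepsilon},\bm{x}}\!\left[G(\bm{\mu}+\sigma\bm{\varepsilon};\bm{x})\right], \qquad G(\bm{w};\bm{x}) := Ne^{Nf_{\bm{w}}(\bm{x})}\nabla_{\bm{w}} f_{\bm{w}}(\bm{x}).
\end{equation*}
Subtracting this identity at $\bm{\mu}_1$ and $\bm{\mu}_2$ and pulling the norm inside the expectation via Jensen's inequality reduces the Lipschitz-smoothness claim to showing that $G(\cdot\,;\bm{x})$ is $L_F$-Lipschitz in its first argument uniformly in $\bm{x}$; the $\sigma\bm{\varepsilon}$ shift cancels because it enters both arguments identically, so the effective increment is exactly $\bm{\mu}_1-\bm{\mu}_2$.

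The $\bm{w}$-Lipschitz estimate for $G$ then follows from a routine product-rule decomposition. Write $G = ab$ with $a(\bm{w})=e^{Nf_{\bm{w}}(\bm{x})}$ and $b(\bm{w})=N\nabla_{\bm{w}}f_{\bm{w}}(\bm{x})$, and use the standard bound
\begin{equation*}
\|a(\bm{w}_1)b(\bm{w}_1)-a(\bm{w}_2)b(\bm{w}_2)\| \leq |a(\bm{w}_1)|\,\|b(\bm{w}_1)-b(\bm{w}_2)\| + |a(\bm{w}_1)-a(\bm{w}_2)|\,\|b(\bm{w}_2)\|.
\end{equation*}
The four ingredients I need are: $|a|\leq e^{NM_f}$ from the upper boundedness of $f$ in Assumption~\ref{f-bound}; $|a(\bm{w}_1)-a(\bm{w}_2)|\leq Ne^{NM_f}L_{0f}\|\bm{w}_1-\bm{w}_2\|$, obtained by applying the mean-value theorem to the scalar map $t\mapsto e^{Nt}$ on the interval between $f_{\bm{w}_1}(\bm{x})$ and $f_{\bm{w}_2}(\bm{x})$ and then invoking the $L_{0f}$-Lipschitzness of $f_{\bm{w}}$; $\|b(\bm{w})\|\leq NL_{0f}$, which follows because a differentiable $L_{0f}$-Lipschitz function necessarily has gradient norm bounded by $L_{0f}$; and $\|b(\bm{w}_1)-b(\bm{w}_2)\|\leq NL_{1f}\|\bm{w}_1-\bm{w}_2\|$ directly from Assumption~\ref{Lipschitz-smooth}. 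Plugging these into the product bound yields Lipschitz constant $e^{NM_f}NL_{1f} + e^{NM_f}N^2 L_{0f}^2 = e^{NM_f}N(NL_{0f}^2+L_{1f}) = L_F$.

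The only nonroutine step is justifying the interchange of the $\bm{\mu}$-gradient and the $(\bm{\varepsilon},\bm{x})$-expectation implicit in the reparameterized identity, which is needed to express $\nabla F_{N,\sigma}$ as $\mathbb{E}[G(\bm{\mu}+\sigma\bm{\varepsilon};\bm{x})]$ before applying Jensen. I would handle this via dominated convergence with the uniform dominating function $Ne^{NM_f}L_{0f}$ supplied by the same bounds on $|a|$ and $\|b\|$ above; this essentially repeats the argument underlying Lemma~\ref{well-defined}. I expect this measure-theoretic justification, rather than the algebraic product-rule bookkeeping, to be the only place where care is required; everything else is a direct combination of Assumptions~\ref{f-bound} and~\ref{Lipschitz-smooth}.
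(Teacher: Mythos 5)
Your proposal is correct and follows essentially the same route as the paper's proof: reparameterize $\bm{w}=\bm{\mu}+\sigma\bm{\varepsilon}$ so that the Gaussian weight no longer depends on $\bm{\mu}$, push the norm inside the expectation, and bound the resulting pointwise Lipschitz constant of $Ne^{Nf_{\bm{w}}(\bm{x})}\nabla_{\bm{w}}f_{\bm{w}}(\bm{x})$ by the same add-and-subtract product decomposition, using the mean-value theorem for the exponential factor and Assumptions~\ref{f-bound} and~\ref{Lipschitz-smooth} for the two pieces. The only cosmetic differences are which factor carries the stray $N$ in the telescoping and your explicit remark about justifying differentiation under the integral, which the paper leaves implicit; the constants match exactly.
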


\begin{lemma}
\label{nablaF-bound}
Under Assumption \ref{f-bound}, $\| \hat{\nabla} F_{N,\sigma}(\bm{\mu}) \|^2 \leq G=N^2e^{2NM_f} L_{0f}^2$.
\end{lemma}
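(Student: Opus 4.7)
The bound is essentially a direct consequence of Assumption~\ref{f-bound}, so the plan is short and mostly consists of applying the chain rule and the triangle inequality term-by-term in the sample average.

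First, I would use the chain rule to rewrite each summand in
\[
\hat{\nabla}F_{N,\sigma}(\bm{\mu}) = \frac{1}{KJ}\sum_{k=1}^{K}\sum_{j=1}^{J}\left.\frac{\partial e^{Nf_{\bm{w}}(\bm{x}_j)}}{\partial \bm{w}}\right|_{\bm{w}=\bm{w}_k}
\]
as $N\,e^{Nf_{\bm{w}_k}(\bm{x}_j)}\,\nabla_{\bm{w}}f_{\bm{w}_k}(\bm{x}_j)$. The proof then reduces to bounding this single-sample vector uniformly in $(\bm{w}_k,\bm{x}_j)$.

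Next I would combine the two pieces supplied by Assumption~\ref{f-bound}. The uniform upper bound $f_{\bm{w}}(\bm{x})\leq M_f$ immediately gives $e^{Nf_{\bm{w}_k}(\bm{x}_j)}\leq e^{NM_f}$. The Lipschitz condition on $f_{\bm{w}}(\bm{x})$ in $\bm{w}$, together with differentiability, yields $\|\nabla_{\bm{w}}f_{\bm{w}}(\bm{x})\|\leq L_{0f}$ at every $\bm{w}$ (this is the standard fact that a differentiable $L$-Lipschitz function has gradient norm bounded by $L$; one can verify it by dividing $|f_{\bm{w}+t\bm{v}}(\bm{x})-f_{\bm{w}}(\bm{x})|\leq L_{0f}\|t\bm{v}\|$ by $t$ and letting $t\to 0$ along $\bm{v}=\nabla_{\bm{w}}f_{\bm{w}}(\bm{x})/\|\nabla_{\bm{w}}f_{\bm{w}}(\bm{x})\|$). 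Multiplying these bounds gives
\[
\bigl\|N\,e^{Nf_{\bm{w}_k}(\bm{x}_j)}\,\nabla_{\bm{w}}f_{\bm{w}_k}(\bm{x}_j)\bigr\|\leq N\,e^{NM_f}\,L_{0f}
\]
for every $(k,j)$.

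Finally, the triangle inequality applied to the sample average yields
\[
\|\hat{\nabla}F_{N,\sigma}(\bm{\mu})\|\leq \frac{1}{KJ}\sum_{k,j} N\,e^{NM_f}\,L_{0f} = N\,e^{NM_f}\,L_{0f},
\]
and squaring gives the claimed bound $G=N^2 e^{2NM_f}L_{0f}^2$. There is no real obstacle here; the only point worth flagging is the step that converts the scalar Lipschitz bound on $f_{\bm{w}}(\bm{x})$ in Assumption~\ref{f-bound} into a pointwise gradient-norm bound, which is what lets us dispense with any integration over the Gaussian law of $\bm{w}_k$ or the data law of $\bm{x}_j$ and keep the estimate purely deterministic.
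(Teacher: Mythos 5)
Your proof is correct and relies on the same core estimate as the paper: each summand $N e^{Nf_{\bm{w}_k}(\bm{x}_j)}\nabla_{\bm{w}} f_{\bm{w}_k}(\bm{x}_j)$ is bounded uniformly by $N e^{NM_f} L_{0f}$ via the $M_f$ cap and the fact that a differentiable $L_{0f}$-Lipschitz function has gradient norm at most $L_{0f}$. The only difference is cosmetic: you apply the triangle inequality to $\|\hat{\nabla}F_{N,\sigma}(\bm{\mu})\|$ and then square, whereas the paper expands $\|\hat{\nabla}F_{N,\sigma}(\bm{\mu})\|^2$ into a quadruple sum and bounds each inner product by Cauchy--Schwarz; both deliver the identical bound $G = N^2 e^{2NM_f} L_{0f}^2$.
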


\begin{theorem}
\label{main-theorem}
Let $\{\bm{\mu}_t\}_{t=0}^T$ be the series produced by (\ref{update-rule}). Under Assumption \ref{f-bound} and \ref{Lipschitz-smooth}, we have 
\begin{equation*}
\begin{split}
\sum_{t=1}^{T}\alpha_t \mathbb{E}[\|\nabla &F_{N_{t},\sigma_{t}}(\bm{\mu}_{t-1})\|^2] \leq e^{(N_0+\Delta)M_f}+ H_1 \sum_{t=0}^{T-1}\alpha_t^2 \\
&-F_{N_{0},b+\sigma_{0}}(\bm{\mu}_{0})  +H_2d \sum_{t=0}^{T-1}\beta^t+H_3 \sum_{t=1}^{T} \phi_{t},
\end{split}
\end{equation*}
where $H_1=(N_0+\Delta)^3 e^{3(N_0+\Delta)M_f}((N_0+\Delta)L^2_{0f}+L_{1f})L_{0f}^2$, $H_2 = e^{(N_0+\Delta)M_f}(N_0+\Delta)L_{0f}\sigma_0(1-\beta)\sqrt{2}$, $H_3:=\max\{ (N_0e)^{-1},  e^{(N_0+\Delta)M_f} M_f \}\Delta$.
\end{theorem}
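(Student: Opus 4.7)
The plan is to combine a classical smoothness-based descent step with a telescoping argument that carefully accounts for the non-stationarity caused by the evolving surrogate $F_{N_t,\sigma_t}$. Applying the $L_F$-smoothness from Lemma~\ref{F-smooth} to $F_{N_t,\sigma_t}$ and substituting the update $\bm{\mu}_t-\bm{\mu}_{t-1}=\alpha_t\hat{\nabla}F_{N_t,\sigma_t}(\bm{\mu}_{t-1})$ yields, at step $t$,
\begin{equation*}
F_{N_t,\sigma_t}(\bm{\mu}_t)\geq F_{N_t,\sigma_t}(\bm{\mu}_{t-1})+\alpha_t\langle \nabla F_{N_t,\sigma_t}(\bm{\mu}_{t-1}),\hat{\nabla}F_{N_t,\sigma_t}(\bm{\mu}_{t-1})\rangle-\frac{L_{F,t}}{2}\alpha_t^2\|\hat{\nabla}F_{N_t,\sigma_t}(\bm{\mu}_{t-1})\|^2.
\end{equation*}
Taking conditional expectation given the past, the unbiasedness of $\hat{\nabla}F$ (which follows from the reparameterization in Section~\ref{gradient}) converts the inner product into $\alpha_t\|\nabla F_{N_t,\sigma_t}(\bm{\mu}_{t-1})\|^2$, and Lemma~\ref{nablaF-bound} deterministically bounds the squared-norm term by $G_t$. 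Replacing the step-dependent $L_{F,t}$ and $G_t$ by their worst-case values at $N=N_0+\Delta$ produces the per-step $H_1\alpha_t^2$ contribution.

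Next, I would sum over $t=1,\dots,T$ and telescope via
\begin{equation*}
F_{N_t,\sigma_t}(\bm{\mu}_t)-F_{N_t,\sigma_t}(\bm{\mu}_{t-1})=\bigl[F_{N_t,\sigma_t}(\bm{\mu}_t)-F_{N_{t-1},\sigma_{t-1}}(\bm{\mu}_{t-1})\bigr]+\bigl[F_{N_{t-1},\sigma_{t-1}}(\bm{\mu}_{t-1})-F_{N_t,\sigma_t}(\bm{\mu}_{t-1})\bigr].
\end{equation*}
The first bracket telescopes to $F_{N_T,\sigma_T}(\bm{\mu}_T)-F_{N_0,\sigma_0}(\bm{\mu}_0)$; because the algorithm initializes with $\sigma_0=b+\sigma_0\beta^0$, the initial term equals $-F_{N_0,b+\sigma_0}(\bm{\mu}_0)$, while the final term is bounded in expectation by $e^{(N_0+\Delta)M_f}$ using only $f_{\bm{w}}(\bm{x})\leq M_f$ from Assumption~\ref{f-bound}. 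The second bracket is the drift introduced by changing $(N,\sigma)$ between consecutive iterations and must be controlled separately.

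For the drift I would further split $F_{N_{t-1},\sigma_{t-1}}(\bm{\mu}_{t-1})-F_{N_t,\sigma_t}(\bm{\mu}_{t-1})$ into a $\sigma$-move at fixed $N=N_{t-1}$ and an $N$-move at fixed $\sigma=\sigma_t$. For the $\sigma$-part, reparameterizing $\bm{w}=\bm{\mu}+\sigma\bm{\epsilon}$ with $\bm{\epsilon}\sim\mathcal{N}(0,I_d)$ and invoking $|e^a-e^b|\leq e^{\max(a,b)}|a-b|$ with the Lipschitz bound on $f$ produces
\begin{equation*}
|F_{N,\sigma_{t-1}}(\bm{\mu})-F_{N,\sigma_t}(\bm{\mu})|\leq e^{NM_f}NL_{0f}|\sigma_{t-1}-\sigma_t|\,\mathbb{E}[\|\bm{\epsilon}\|].
\end{equation*}
Combining the coordinatewise estimate $\mathbb{E}[\|\bm{\epsilon}\|]\leq\sum_{i=1}^d\mathbb{E}|\epsilon_i|=d\sqrt{2/\pi}\leq\sqrt{2}\,d$ with $|\sigma_{t-1}-\sigma_t|=\sigma_0(1-\beta)\beta^{t-1}$ yields the $H_2 d\sum_{t=0}^{T-1}\beta^t$ contribution. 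For the $N$-part, differentiating under the integral sign gives $\partial F/\partial N=\mathbb{E}[f_{\bm{w}}(\bm{x})e^{Nf_{\bm{w}}(\bm{x})}]$, whose magnitude is at most $\max\{M_f e^{NM_f},(Ne)^{-1}\}$ after using $f\leq M_f$ on the positive branch and $\inf_{y\in\mathbb{R}}ye^{Ny}=-(Ne)^{-1}$ on the negative branch. Multiplying by $|N_t-N_{t-1}|=\phi_t\Delta$ and summing delivers the $H_3\sum_{t=1}^T\phi_t$ term.

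The main obstacle is the drift analysis: in particular, extracting the correct dimension factor in the $\sigma$-drift (which forces the coordinatewise $L^1$ bound rather than the tighter Jensen estimate $\sqrt{d}$, in order to match the $d$ in $H_2 d$) and handling the two-sided behavior of $y\mapsto ye^{Ny}$ in the $N$-drift, where a naive bound would blow up as $y\to-\infty$ if $f$ is unbounded below. Once both drift terms are bounded by the stated constants, assembling them with the telescoped descent and the noise accumulation $H_1\sum_t\alpha_t^2$ immediately yields the theorem.
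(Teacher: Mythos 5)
Your proposal is correct and follows essentially the same route as the paper's proof: a three-way decomposition of the increment $\mathbb{E}[F_{N_t,\sigma_t}(\bm{\mu}_t)]-\mathbb{E}[F_{N_{t-1},\sigma_{t-1}}(\bm{\mu}_{t-1})]$ into a $\bm{\mu}$-move (handled via the smoothness lemma, unbiasedness of $\hat{\nabla}F$, and the deterministic gradient-norm bound), a $\sigma$-move (handled via the Lipschitz constant of $f$ together with $\mathbb{E}\|\bm{\epsilon}\|\le\sqrt{2}d$), and an $N$-move (handled via the two-sided pointwise bound on $x\mapsto xe^{Nx}$ with $x\le M_f$), followed by telescoping and rearranging. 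The only presentational differences are that you invoke the standard $L_F/2$ descent lemma where the paper derives a coefficient $L_F$ via MVT and Cauchy--Schwarz, and you bound $\mathbb{E}\|\bm{\epsilon}\|$ by the coordinatewise $\ell^1$ estimate $d\sqrt{2/\pi}$ where the paper uses the chi-distribution mean $\sqrt{2}\,\Gamma((d+1)/2)/\Gamma(d/2)$; both yield bounds within the stated constants $H_1$, $H_2$, $H_3$.
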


\begin{corollary}
\label{conv-rate}
\textbf{Corollary \ref{conv-rate}}. Let $\{\bm{\mu}_t\}_{t=1}^T$ be the series produced by (\ref{update-rule}) with a pre-selected deterministic $\bm{\mu}_0$ and a learning rate $\alpha_t=t^{-(1/2+\gamma)}$ where $\gamma\in (0,1/2)$ and $T\geq 2$. Under Assumption \ref{lr-assumption}, \ref{f-bound}, and \ref{Lipschitz-smooth}, for any $\varepsilon>0$, whenever $T> (\frac{C_{0,\Delta}(1-2\gamma)}{2-2^{1/2+\gamma}}d\varepsilon^{-1})^{2/(1-2\gamma)}=O_{\Delta}((d \varepsilon^{-1})^{2/(1-2\gamma)})$, we have that
$$ \min_{t\in\{\lfloor T/2\rfloor,...,T\}} \mathbb{E}[\|\nabla F_{N_{t},\sigma_{t}}(\bm{\mu}_{t-1})\|^2] \leq \varepsilon. $$ Here, $C_{0,\Delta}:=e^{(N_0+\Delta)M_f} - F_{N_{0},b+\sigma_{0}}(\bm{\mu}_{0}) + H_1 \sum_{t=1}^{\infty}t^{-(1/2+\gamma)} +H_2(1-\beta)^{-1}+H_3$, $C_{1,\Delta}= \max\{1, 2/C_{0,\Delta}\}$, $\lfloor T/2 \rfloor$ denotes the largest integer no greater than $T/2$, and $\{H_1,H_2,H_3\}$ are as those defined in Theorem \ref{main-theorem}.
\end{corollary}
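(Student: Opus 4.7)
The plan is to convert the aggregate Lyapunov-style bound in Theorem \ref{main-theorem} into a min-type complexity guarantee on the second half of the trajectory, using the standard \emph{minimum-dominates-weighted-average} device together with an integral comparison to control the step-size tail sum.

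First, I would simplify the right-hand side of Theorem \ref{main-theorem} into a $T$-independent quantity. By Assumption \ref{lr-assumption}, $\sum_{t=1}^{T}\phi_{t} \le 1$; the geometric sum satisfies $\sum_{t=0}^{T-1}\beta^{t} \le (1-\beta)^{-1}$; and with $\alpha_t = t^{-(1/2+\gamma)}$ one has $\alpha_t^{2} = t^{-(1+2\gamma)}$ with $1+2\gamma > 1$, so $\sum_t \alpha_t^{2}$ is dominated by the convergent $p$-series that appears in the definition of $C_{0,\Delta}$. Since $d \ge 1$, all constants without an explicit $d$ can be absorbed into a factor of $d$, yielding
\begin{equation*}
\sum_{t=1}^{T}\alpha_t\,\mathbb{E}\bigl[\|\nabla F_{N_t,\sigma_t}(\bm{\mu}_{t-1})\|^{2}\bigr] \;\le\; d\, C_{0,\Delta}.
\end{equation*}
Since $\alpha_t > 0$, restricting the sum to $t \in \{\lfloor T/2\rfloor,\ldots,T\}$ and using that the minimum is dominated by any positive weighted average gives
\begin{equation*}
\min_{t \in \{\lfloor T/2\rfloor,\ldots,T\}} \mathbb{E}\bigl[\|\nabla F_{N_t,\sigma_t}(\bm{\mu}_{t-1})\|^{2}\bigr] \;\le\; \frac{d\, C_{0,\Delta}}{\sum_{t=\lfloor T/2\rfloor}^{T}\alpha_t}.
\end{equation*}

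Next, I would lower-bound the tail $\sum_{t=\lfloor T/2\rfloor}^{T}\alpha_t$ by integral comparison on the decreasing map $s \mapsto s^{-(1/2+\gamma)}$:
\begin{equation*}
\sum_{t=\lfloor T/2\rfloor}^{T} t^{-(1/2+\gamma)} \;\ge\; \int_{T/2}^{T} s^{-(1/2+\gamma)}\,ds \;=\; \frac{T^{1/2-\gamma}\bigl(1 - 2^{\gamma-1/2}\bigr)}{1/2-\gamma} \;=\; \frac{(2 - 2^{1/2+\gamma})\, T^{1/2-\gamma}}{1-2\gamma},
\end{equation*}
which is strictly positive because $\gamma < 1/2$ implies $2^{1/2+\gamma} < 2$. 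Substituting this into the previous display and requiring that the resulting upper bound be at most $\varepsilon$ then yields, after solving for $T$, exactly the threshold stated in the corollary.

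No step is conceptually difficult; the argument is a mechanical composition of (a) Theorem \ref{main-theorem}, (b) the min-versus-weighted-average inequality, and (c) integral comparison for a decreasing power. The only bookkeeping points that require a bit of care are (i) the absorption of the $d$-free terms into the factor $d\, C_{0,\Delta}$, which uses $d \ge 1$, and (ii) the brief algebraic identity $2(1-2^{\gamma-1/2}) = 2 - 2^{1/2+\gamma}$ needed to match the denominator that appears in the stated $T$-threshold.
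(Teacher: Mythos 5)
Your proposal is correct and follows essentially the same route as the paper's proof: bound the weighted sum by $dC_{0,\Delta}$ using Theorem~\ref{main-theorem} together with $\sum_t\phi_t\le 1$, $\sum_t\beta^t\le(1-\beta)^{-1}$, and the convergent $p$-series for $\alpha_t^2$; restrict to the second half of the trajectory; apply the minimum-versus-weighted-average bound; and lower-bound $\sum_{t=\lfloor T/2\rfloor}^T t^{-(1/2+\gamma)}$ by the integral $\int_{T/2}^T s^{-(1/2+\gamma)}\,ds$ to obtain the $T^{1/2-\gamma}$ denominator and the $\tfrac{1-2\gamma}{2-2^{1/2+\gamma}}$ constant. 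The only cosmetic difference is that the paper first writes the integral with lower limit $\lfloor T/2\rfloor$ and then replaces $\lfloor T/2\rfloor^{1/2-\gamma}$ by $(T/2)^{1/2-\gamma}$, while you pass directly to $T/2$ as the lower limit; the numbers match.
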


\begin{corollary}
\label{Delta-irrelevant}
The dependence of the $O$-term on $\Delta$ can be removed if we assume that $M_f\leq 0$.
\end{corollary}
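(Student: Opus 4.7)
My plan is to examine how $\Delta$ enters the constant $C_{0,\Delta}$ of Corollary~\ref{conv-rate}, and to show that under the additional hypothesis $M_f\leq 0$, each $\Delta$-dependent contribution admits a bound that is uniform in $\Delta$. The core observation is that $M_f\leq 0$ forces $e^{k(N_0+\Delta)M_f}\leq 1$ for every $k>0$, so any factor in which $\Delta$ appears only through such an exponential is automatically uniformly bounded. In particular, the leading term $e^{(N_0+\Delta)M_f}-F_{N_0,b+\sigma_0}(\bm{\mu}_0)$ is sandwiched in $[-F_{N_0,b+\sigma_0}(\bm{\mu}_0),1]$ and carries no $\Delta$-dependence.

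For $H_1$ and $H_2$, the $\Delta$-dependence enters only through factors of the form $(N_0+\Delta)^k e^{c(N_0+\Delta)M_f}$ with $k\in\{1,3,4\}$ and $c\in\{1,3\}$. Invoking the elementary inequality $\sup_{x>0} x^k e^{-\lambda x}=(k/(\lambda e))^k$ with $\lambda=c|M_f|$ replaces the growing polynomial factor by a constant depending only on $k$, $c$, and $M_f$; multiplying by the remaining $\Delta$-free prefactors yields bounds on $H_1$ and $H_2$ that are uniform in $\Delta$, with the degenerate endpoint $M_f=0$ recovered as the limit $M_f\to 0^-$.

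The main obstacle is $H_3=\max\{(N_0 e)^{-1},\,e^{(N_0+\Delta)M_f}M_f\}\Delta$: under $M_f\leq 0$ the second argument is non-positive, the max collapses to $(N_0 e)^{-1}$, and $H_3=(N_0 e)^{-1}\Delta$ remains linear in $\Delta$. To eliminate this residual dependence I would revisit the step of Theorem~\ref{main-theorem} that produces $H_3$, namely the accumulation of $N$-induced changes $\sum_{t}|F_{N_t,\sigma_t}(\bm{\mu}_{t-1})-F_{N_{t-1},\sigma_t}(\bm{\mu}_{t-1})|\leq \sum_{t}\int_{N_{t-1}}^{N_t}|\partial_N F_{N,\sigma_t}(\bm{\mu}_{t-1})|\,dN$. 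A short one-variable analysis of $g(f)=fe^{Nf}$ on $(-\infty,M_f]$ shows that, whenever $N\geq 1/|M_f|$, $g$ is monotone on this interval and therefore $|\mathbb{E}[f_{\bm{w}}(\bm{x})e^{Nf_{\bm{w}}(\bm{x})}]|\leq |M_f|e^{NM_f}$; inserting this sharper pointwise bound, the telescoping integral collapses to $\int_{N_0}^{N_0+\Delta}|M_f|e^{NM_f}\,dN = e^{N_0 M_f}-e^{(N_0+\Delta)M_f}\leq 1$, independent of $\Delta$. The hard part of the proof is this tightening of Theorem~\ref{main-theorem} rather than the final algebra; once it is in place, every term of $C_{0,\Delta}$ becomes $\Delta$-free and Corollary~\ref{conv-rate} reduces to $T=O((d\varepsilon^{-1})^{2/(1-2\gamma)})$.
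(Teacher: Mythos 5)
Your treatment of $H_1$ and $H_2$ matches the paper's in substance (the paper just observes continuity plus $\lim_{\Delta\to\infty}H_i(\Delta)=0$ when $M_f<0$ and invokes boundedness; you make the bound explicit via $\sup_{x>0}x^ke^{-\lambda x}=(k/(\lambda e))^k$). For $H_3$, however, you take a genuinely different route from the paper, and it is worth comparing the two. The paper stays with the discrete MVT bound $|e^{N_{t+1}x}-e^{N_tx}|\le e^{N_tx}(-x)\phi_{t+1}\Delta\le (N_te)^{-1}\phi_{t+1}\Delta$ and then kills the $\Delta$ by exploiting the \emph{schedule}: since $N_t=N_0+\Delta\sum_{\tau\le t}\phi_\tau\ge\Delta\sum_{\tau\le t}\phi_\tau$, one gets $(N_te)^{-1}\phi_{t+1}\Delta\le\phi_{t+1}/(\phi_1 e)$, a $\Delta$-free summand whose total over $t$ is at most $\phi_1^{-1}$; note this argument never needs $M_f<0$ strictly, only $x\le 0$. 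Your approach instead integrates $|\partial_N F_{N,\sigma}(\bm\mu)|=|\mathbb{E}[f\,e^{Nf}]|$ in $N$, uses the sharper pointwise envelope $|f\,e^{Nf}|\le|M_f|e^{NM_f}$ valid once $g(f)=fe^{Nf}$ is monotone on $(-\infty,M_f]$, and observes that $\int_{N_0}^{N_0+\Delta}|M_f|e^{NM_f}\,dN=e^{N_0M_f}-e^{(N_0+\Delta)M_f}\le 1$. This exploits the exponential \emph{decay} of the integrand in $N$ rather than the linear growth of $N_t$ in $\Delta$; both mechanisms are sound, and yours gives a cleaner closed-form constant. Two small gaps in your write-up should be filled: (i) the envelope $|M_f|e^{NM_f}$ requires $N\ge 1/|M_f|$, so if $N_0<1/|M_f|$ you must cover the initial segment $[N_0,1/|M_f|]$ with the crude bound $(N e)^{-1}\le(N_0e)^{-1}$, contributing at most $(N_0e)^{-1}(1/|M_f|-N_0)$, again $\Delta$-free; and (ii) your argument genuinely needs $M_f<0$ strictly (at $M_f=0$ the envelope collapses to $0$ and $1/|M_f|$ is undefined) — but this limitation is shared by the paper's own handling of $H_1,H_2$, whose proof explicitly invokes $M_f<0$, so the corollary as stated with $M_f\le 0$ is already slightly loose on the paper's side.
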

\begin{remark}
\label{explain-converge}
In Corollary \ref{conv-rate}, define 
$$T^*:= \min_{\tau\in\{ \lfloor T/2\rfloor,...,T\}}\mathbb{E}[\|\nabla F_{N_{t},\sigma_{t}}(\bm{\mu}_{t-1})\|^2]$$
and $\bm{\mu}^*_T:=\bm{\mu}_{T^*}$. As $T\rightarrow +\infty$, for all $t\in [\lfloor T/2\rfloor,T]$, $N_t$ and $\sigma_t$ satisfy $N_t\rightarrow N_0+\Delta$ and $\sigma_t\rightarrow b$. Under the assumed Lipschitz continuity, Lipschitiz smoothness, and $boundedness$ of $f$, it follows that $\nabla F_{N_t,\sigma_t}$ converges uniformly to $\nabla F_{N_0+\Delta,b}$ over the interval $[\lfloor T/2\rfloor,T]$ as $T$ becomes sufficiently large. Consequently, $\bm{\mu}^*_T$ converges in expectation to a stationary point of $F_{N_0+\Delta,b}$.
\end{remark}

\subsection{Convergence to the Neighborhood of Global Optimum}
\label{converge-to-neighborhood}
The main task in this section is to show in Theorem \ref{main-convergence-thm} that, if $N$ is greater than some threshold, all the stationary points of the surrogate $F_{N,\sigma}(\bm{\mu})$ defined in (\ref{surrogate}) lie in a small neighborhood of $\bm{w}^*:=\arg\max_{\bm{w}}G(\bm{w})$. 

\begin{assumption}
\label{ess}
For any fixed $\bm{w}$, define $\Psi(\bm{w}):=\inf\{a\in\mathbb{R} | \mathbb{P}(f_{\bm{w}}(\bm{x})\leq a)=1\}$, where $\mathbb{P}$ is regarding the distribution $\mathcal{D}$ of $\bm{x}$. We assume that (1) $\bm{w}^*:=\arg\max_{\bm{w}}G(\bm{w})$ is also the unique global maximizer of $\Psi$; (2) for any $\delta>0$, $\sup_{\|\bm{w}-\bm{w}^*\|\geq\delta}\Psi(\bm{w})<\Psi(\bm{w}^*)$; and (3) for any $\epsilon>0$, there exist $c_{\epsilon}>0$ and $\delta_{\epsilon}>0$ such that $\inf_{\|\bm{w}-\bm{w}^*\|\leq \delta_{\epsilon}}\mathbb{P}(f_{\bm{w}}(\bm{x})\geq \Psi(\bm{w})-\epsilon)>c_{\epsilon}$.
\end{assumption}
\begin{remark}
$\Psi(\bm{w})$ measures the best-case fitness $f_{\bm{w}}(\bm{x})$, achievable under the data distribution $\bm{x}\sim\mathcal{D}$. It is also called as the essential supreme. We emphasize that the exact coincidence required in Part (1) can be relaxed to an approximate one. Since our algorithm converges to a neighborhood of $\arg\max_{\bm{w}}{\Psi(\bm{w})}$ (see Corollary \ref{wrap-up}), convergence to a neighborhood of $\bm{w}^*$ is also sufficient for our purposes.
\end{remark}

Intuitively, Part (1) assumes an alignment between average fitness and best-case fitness, namely that a parameter value $\bm{w}$ which maximizes the expected fitness $\mathbb{E}_{\bm{x}\sim\mathcal{D}}[f_{\bm{w}}(\bm{x})]$ also attains near-optimal best-case performance, measured by the essential supremum of $f_{\bm{w}}(\bm{x})$ under $\bm{x}\sim\mathcal{D}$. This assumption rules out pathological situations in which a model achieves a very high fitness on a vanishingly small subset of inputs while performing poorly on the bulk of the data.

Such an alignment is reasonable when the fitness $f_{\bm{w}}(\bm{x})$ varies smoothly over the data domain and its distribution under $\mathcal{D}$ is well behaved. For instance, if $f_{\bm{w}}(\bm{x})$ is uniformly Lipschitz in $\bm{x}$ and the data distribution assigns non-negligible probability mass to neighborhoods in the input space, then high fitness values cannot be achieved on isolated points without also improving the fitness on a set of inputs with positive probability, which in turn increases the expectation. Similarly, if the random variable $f_{\bm{w}}(\bm{x})$ has light tails (e.g., is sub-Gaussian), then extreme values are tightly controlled by the mean, preventing sharp discrepancies between best-case and average performance.

This type of average-best alignment appears in intensively studied subfields of machine learning. In partial-label learning\footnote{More ongoing research works on partial labelling can be found in \cite{GongBisht2025,WangWu2025,lv2024makes}.}, for example, \cite{cabannnes2020} show in their Theorem 1 that, under appropriate structural assumptions on label ambiguity, minimizing the expected true loss coincides with minimizing an infimum-based loss that captures the best achievable performance over admissible labels. While problem-specific, this result illustrates that, in structured learning settings, optimizing an average criterion can be equivalent to optimizing a best-case criterion.

More broadly, regularity assumptions such as Lipschitz continuity and light-tailed distributions are standard in optimization and generalization analyses in machine learning (e.g., \cite{sabato2013,zeng2022}). Under such conditions, it is natural to expect that models achieving superior average fitness also exhibit superior best-case behavior.

Part (3) in Assumption \ref{ess} assumes that, for each $\bm{x}$, the near-optimal values of $f_{\bm{w}}(\bm{x})$ are supported by a non-negligible fraction of the data distribution $\mathcal{D}$, ruling out pathological cases where optimal fitness is achieved only on sets of vanishing probability. This belongs to the kind of assumptions that assume lower bounds on the probability of near-optimal sets, which are common among research works in machine learning (e.g., \cite{tong2020neyman, perdomo2020performative}).

\begin{lemma}
\label{sn-lemma}
Under Assumption \ref{f-bound} and \ref{ess}, for any $\delta>0$, we have
(1) $\lim_{N\rightarrow +\infty}s_N/d_N=0$, where $s_N:=\sup_{\|\bm{w}-\bm{w}^*\|\geq\delta}G_N(\bm{w})$, $d_N:=G_N(\bm{w}^*)e^{-N\eta/4}$, and $\eta:= \Psi(\bm{w}^*)-\sup_{\|\bm{w}-\bm{w}^*\|\geq \delta} \Psi(\bm{w})$, and (2) there exists $\delta'\in(0,\delta)$ and $N_{\delta'}$ such that $G_N(\bm{w})>d_N$ whenever $\|\bm{w}-\bm{w}^*\|<\delta'$ and $N>N_{\delta'}$.
\end{lemma}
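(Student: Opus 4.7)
The plan is to combine a Laplace-type concentration argument for the outer expectation in $G_N(\bm{w})=\mathbb{E}_{\bm{x}\sim\mathcal{D}}[e^{Nf_{\bm{w}}(\bm{x})}]$ with the positive gap $\eta := \Psi(\bm{w}^*)-\sup_{\|\bm{w}-\bm{w}^*\|\geq\delta}\Psi(\bm{w})>0$ supplied by Assumption~\ref{ess}(2). Intuitively, as $N\to\infty$, $G_N(\bm{w})$ is dominated by the essential supremum $\Psi(\bm{w})$, so the $\Psi$-gap translates into an exponential separation between the ``far'' values $G_N(\bm{w})$ (for $\|\bm{w}-\bm{w}^*\|\geq\delta$) and $G_N(\bm{w}^*)$. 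The boundedness $\Psi(\bm{w}^*)\leq M_f$ from Assumption~\ref{f-bound} ensures all quantities are finite.

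For Part (1), I would first upper bound $s_N$ using the essential-supremum inequality $f_{\bm{w}}(\bm{x})\leq \Psi(\bm{w})$ $\mathcal{D}$-almost surely, which gives $G_N(\bm{w})\leq e^{N\Psi(\bm{w})}$ and hence $s_N\leq e^{N(\Psi(\bm{w}^*)-\eta)}$. For the lower bound on $G_N(\bm{w}^*)$, I invoke Assumption~\ref{ess}(3) with tolerance $\epsilon:=\eta/8$: restricting the expectation to the event $\{f_{\bm{w}^*}(\bm{x})\geq \Psi(\bm{w}^*)-\eta/8\}$, whose probability is at least $c_{\eta/8}>0$, yields $G_N(\bm{w}^*)\geq c_{\eta/8}\,e^{N(\Psi(\bm{w}^*)-\eta/8)}$. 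Substituting into the definition of $d_N$ produces $d_N\geq c_{\eta/8}\,e^{N(\Psi(\bm{w}^*)-3\eta/8)}$, so the ratio satisfies $s_N/d_N\leq c_{\eta/8}^{-1}\,e^{-5N\eta/8}\to 0$, as required.

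For Part (2), I would use the uniform Lipschitz bound in Assumption~\ref{f-bound}: pointwise in $\bm{x}$, $f_{\bm{w}}(\bm{x})\geq f_{\bm{w}^*}(\bm{x})-L_{0f}\|\bm{w}-\bm{w}^*\|$. Exponentiating and taking expectation over $\bm{x}\sim\mathcal{D}$ gives $G_N(\bm{w})\geq e^{-NL_{0f}\|\bm{w}-\bm{w}^*\|}\,G_N(\bm{w}^*)$. Choosing any $\delta'\in(0,\min\{\delta,\eta/(4L_{0f})\})$ ensures $L_{0f}\|\bm{w}-\bm{w}^*\|<\eta/4$ strictly whenever $\|\bm{w}-\bm{w}^*\|<\delta'$, so $G_N(\bm{w})>G_N(\bm{w}^*)\,e^{-N\eta/4}=d_N$. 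This holds for every $N>0$, so any positive threshold $N_{\delta'}$ satisfies the claim.

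The main conceptual point---rather than a genuine obstacle---is to select the tolerance $\epsilon$ in Assumption~\ref{ess}(3) carefully so that, after absorbing the factor $e^{-N\eta/4}$ present in $d_N$, the net exponential rate in $s_N/d_N$ remains negative. Any $\epsilon<3\eta/4$ suffices; the symmetric choice $\epsilon=\eta/8$ keeps the arithmetic transparent and delivers the explicit rate $-5\eta/8$. The positivity of $\eta$, which drives the whole argument, is guaranteed exactly by Assumption~\ref{ess}(2); without it the separation between the far region and $\bm{w}^*$ collapses and the lemma fails.
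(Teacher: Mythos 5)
Your proof is correct, and both parts take a genuinely different, more direct route than the paper's. For Part (1), the paper first establishes the log-asymptotics $\lim_{N\to\infty}\frac{1}{N}\log G_N(\bm{w})=\Psi(\bm{w})$ and $\lim_{N\to\infty}\frac{1}{N}\log s_N=\sup_{\|\bm{w}-\bm{w}^*\|\geq\delta}\Psi(\bm{w})$ (a Varadhan-type argument via $\liminf$/$\limsup$ squeezing), then concludes by a soft ``eventually $\frac{1}{N}\log(s_N/G_N(\bm{w}^*))<-\eta/2$'' step. You bypass the intermediate lemma entirely: the one-sided inequality $G_N(\bm{w})\leq e^{N\Psi(\bm{w})}$ gives $s_N\leq e^{N(\Psi(\bm{w}^*)-\eta)}$, and Assumption~\ref{ess}(3) applied at $\bm{w}^*$ with $\epsilon=\eta/8$ gives $G_N(\bm{w}^*)\geq c_{\eta/8}e^{N(\Psi(\bm{w}^*)-\eta/8)}$, yielding the explicit rate $s_N/d_N\leq c_{\eta/8}^{-1}e^{-5N\eta/8}$. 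This is more elementary and delivers a quantitative decay rate that the paper's argument does not make explicit. For Part (2), the paper invokes Assumption~\ref{ess}(3) again with $\epsilon=\eta/16$, relies on continuity of $\Psi$ (a consequence of Assumption~\ref{f-bound}), and obtains the inequality only for $N>\frac{8}{\eta}\log c_\epsilon^{-1}$. You instead use the $\bm{x}$-uniform Lipschitz bound $f_{\bm{w}}(\bm{x})\geq f_{\bm{w}^*}(\bm{x})-L_{0f}\|\bm{w}-\bm{w}^*\|$ directly to get $G_N(\bm{w})\geq e^{-NL_{0f}\|\bm{w}-\bm{w}^*\|}G_N(\bm{w}^*)$, so that $\delta'<\min\{\delta,\eta/(4L_{0f})\}$ makes the claim hold for every $N>0$. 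Your version is simpler, avoids the second appeal to Assumption~\ref{ess}(3), and gives a strictly stronger conclusion (no nontrivial threshold $N_{\delta'}$ is needed). Both approaches hinge on the same two ingredients---the gap $\eta>0$ from Assumption~\ref{ess}(2) and the positive-probability lower bound from Assumption~\ref{ess}(3)---but you deploy them more economically.
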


\begin{theorem}
\label{main-convergence-thm}
Suppose Assumption \ref{f-bound} and \ref{ess} hold. Denote $\bm{\mu}=[\mu_1,...,\mu_d]$ and $\bm{w}^*=[w_1^*,...,w_d^*]$. For any $\delta>0$, $\sigma>0$, and $M>0$, there exists a threshold $N_{\delta,\sigma,M}>0$ such that, whenever $N>N_{\delta,\sigma,M}$ and $\|\bm{\mu}\|\leq M$, for all $i\in\{1,2,...,d\}$ we have $\frac{\partial F_{N,\sigma}(\bm{\mu})}{\partial \mu_i}<0$ if $\mu_i>w_i^*+\delta$, and $\frac{\partial F_{N,\sigma}(\bm{\mu})}{\partial \mu_i}>0$ if $\mu_i<w_i^*-\delta$. 
\end{theorem}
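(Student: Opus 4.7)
The plan is to reformulate the partial derivative as the difference between a ``tilted'' expectation and $\mu_i$, and then use Lemma~\ref{sn-lemma} to show that this expectation concentrates tightly around $w_i^*$ for large $N$, uniformly over the compact region $\{\|\bm{\mu}\|\leq M\}$.

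First, differentiating under the integral sign through the Gaussian density (justified by Assumption~\ref{f-bound} and Lemma~\ref{well-defined}) yields
$$\frac{\partial F_{N,\sigma}(\bm{\mu})}{\partial\mu_i}=\frac{F_{N,\sigma}(\bm{\mu})}{\sigma^2}\bigl(\mathbb{E}_{q}[w_i]-\mu_i\bigr),$$
where $G_N(\bm{w}):=\mathbb{E}_{\bm{x}\sim\mathcal{D}}[e^{Nf_{\bm{w}}(\bm{x})}]$ and $q(\bm{w})\propto G_N(\bm{w})\,p(\bm{w}\mid\bm{\mu},\sigma)$ is the $\mathcal{N}(\bm{\mu},\sigma^2 I_d)$ density reweighted by $G_N$. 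Since $F_{N,\sigma}(\bm{\mu})>0$, the sign of $\partial F_{N,\sigma}/\partial\mu_i$ agrees with that of $\mathbb{E}_{q}[w_i]-\mu_i$, so it is enough to show $|\mathbb{E}_{q}[w_i]-w_i^*|<\delta/2$, uniformly in $\|\bm{\mu}\|\leq M$, once $N$ exceeds some threshold. Both conclusions of the theorem then follow at once: if $\mu_i>w_i^*+\delta$ then $\mathbb{E}_{q}[w_i]-\mu_i<(w_i^*+\delta/2)-(w_i^*+\delta)=-\delta/2<0$, and symmetrically $\mu_i<w_i^*-\delta$ gives $\mathbb{E}_{q}[w_i]-\mu_i>\delta/2>0$.

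To establish the concentration, I would apply Lemma~\ref{sn-lemma} with its parameter set to $\delta/4$ rather than $\delta$, in order to leave a buffer. This gives a constant $s_N=\sup_{\|\bm{w}-\bm{w}^*\|\geq\delta/4}G_N(\bm{w})$ with $s_N/d_N\to0$, together with an inner radius $\delta'\in(0,\delta/4)$ and a threshold $N_{\delta'}$ such that $G_N(\bm{w})>d_N$ on $\{\|\bm{w}-\bm{w}^*\|<\delta'\}$ for every $N>N_{\delta'}$. Splitting
$$|\mathbb{E}_{q}[w_i]-w_i^*|\leq\mathbb{E}_{q}[|w_i-w_i^*|\mathbf{1}_{\|\bm{w}-\bm{w}^*\|<\delta/4}]+\mathbb{E}_{q}[|w_i-w_i^*|\mathbf{1}_{\|\bm{w}-\bm{w}^*\|\geq\delta/4}],$$
the first summand is trivially at most $\delta/4$. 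For the second, I would bound its numerator by $s_N(\sigma\sqrt{2/\pi}+M+\|\bm{w}^*\|)$ using $|w_i-w_i^*|\leq\|\bm{w}-\bm{w}^*\|$, the triangle inequality and the standard Gaussian first-moment estimate, and bound its denominator $F_{N,\sigma}(\bm{\mu})$ from below by $d_N\cdot c_1$, where
$$c_1:=\inf_{\|\bm{\mu}\|\leq M}\int_{\|\bm{w}-\bm{w}^*\|<\delta'}p(\bm{w}\mid\bm{\mu},\sigma)\,d\bm{w}>0$$
is a uniform lower bound on the Gaussian mass of the $\delta'$-ball around $\bm{w}^*$, obtained by lower-bounding $p(\bm{w}\mid\bm{\mu},\sigma)$ using $\|\bm{w}-\bm{\mu}\|\leq\|\bm{w}^*\|+\delta'+M$ and multiplying by the ball's volume. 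The resulting ratio $(s_N/d_N)\cdot(\sigma\sqrt{2/\pi}+M+\|\bm{w}^*\|)/c_1$ vanishes as $N\to\infty$ by Lemma~\ref{sn-lemma}(1), so it is smaller than $\delta/4$ whenever $N$ exceeds a further threshold depending on $(\delta,\sigma,M)$.

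I expect the main obstacle to be the bookkeeping required to keep every bound uniform in $\bm{\mu}$ over $\{\|\bm{\mu}\|\leq M\}$: uniformity of $c_1$ is precisely what forces the boundedness hypothesis on $\bm{\mu}$, since otherwise the Gaussian mass of any fixed ball around $\bm{w}^*$ can be driven to zero by sending $\|\bm{\mu}\|\to\infty$. A second subtlety is the choice to apply Lemma~\ref{sn-lemma} with $\delta/4$ instead of $\delta$, which ensures that the near ball $\{\|\bm{w}-\bm{w}^*\|<\delta'\}$ sits strictly inside the far complement $\{\|\bm{w}-\bm{w}^*\|\geq\delta/4\}$ of the decomposition, so that no intermediate annulus where $G_N$ lacks a useful upper bound appears anywhere in the argument.
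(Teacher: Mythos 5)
Your proof is correct, and it takes a genuinely different route from the paper. The paper decomposes $F_{N,\sigma}=v_N(H_{N,\sigma}+R_{N,\sigma})$ into a ``near'' integral over $B(\bm{w}^*;\delta)$ and a ``far'' integral over its complement, then bounds $\left|\partial R/\partial\mu_i\right|$ from above by $\tfrac{\sqrt{2}}{\sqrt{\pi}\sigma}\,s_N/v_N$ and $\left|\partial H/\partial\mu_i\right|$ from below by $(\delta-\delta')e^{-M^2/\sigma^2}V(\delta',d,\sigma)$, concluding by a dominance-of-signs argument. You instead rewrite the score as $\partial F/\partial\mu_i=\tfrac{F_{N,\sigma}(\bm{\mu})}{\sigma^2}\bigl(\mathbb{E}_q[w_i]-\mu_i\bigr)$ for the tilted measure $q\propto G_N\cdot\mathcal{N}(\bm{\mu},\sigma^2 I_d)$, and reduce the theorem to a concentration statement $|\mathbb{E}_q[w_i]-w_i^*|<\delta/2$ uniformly over $\|\bm{\mu}\|\le M$. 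Both proofs deploy the same two ingredients from Lemma~\ref{sn-lemma} --- the ratio $s_N/d_N\to 0$ to kill the far-field contribution and the $\delta'$-ball lower bound on $G_N$ to lower-bound the near-field mass --- and both rely on $\|\bm{\mu}\|\le M$ for exactly the reason you identify: a uniform lower bound on the Gaussian mass of a fixed ball around $\bm{w}^*$. What your reformulation buys is transparency and a slightly stronger intermediate statement: you actually prove that the tilted posterior mean converges to $\bm{w}^*$ uniformly on the compact, which immediately yields the sign conditions for free rather than via a separate dominance comparison of two partial derivatives. The cost is that it is slightly less direct to see, without the reformulation step, where the Gaussian first-moment constant $\sigma\sqrt{2/\pi}$ enters (the paper's analogue is the $\tfrac{\sqrt{2}}{\sqrt{\pi}\sigma}$ factor in its bound on $\partial R/\partial\mu_i$). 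Your choice of radius $\delta/4$ for the far set and $\delta'<\delta/4$ for the near ball is a valid, conservative choice, though the annulus concern you raise is not actually an issue: $s_N$ already upper-bounds $G_N$ on the entire complement of the near ball used in the first moment split, and $\delta'$ only enters the denominator bound, so a split at radius $\delta$ as in the paper would also work here.
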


\begin{assumption}
\label{bounded-solutions}
Let $\bm{\mu}_0$ be fixed. We assume that there exists a threshold $\Delta_0>0$ such that the sequence $\{\bm{\mu}_t\}$ produced by the iteration rule (\ref{update-rule}) lies in a bounded region $\|\bm{\mu}_t\|<M_{\bm{\mu}_0}$ whenever $\Delta>\Delta_0$. 
\end{assumption}
\begin{remark}
This is a reasonable assumption because, for any $\delta,\sigma_t>0$, the expectation of the updating term $\alpha_t\hat{\nabla}F_{N_t,\sigma_t}(\bm{\mu}_{t-1})$ effectively pulls $\bm{\mu}_t$ toward a neighborhood of $\bm{w}^*$ when $N_t$ is sufficiently large.
\end{remark}
Below, we summarize our theoretical results.
\begin{corollary}
\label{wrap-up}
Suppose Assumption \ref{lr-assumption}---\ref{bounded-solutions} hold. Let $\delta,b,N_0>0$ be given, along with an initial point $\bm{\mu}_0\in\mathbb{R}^d$. Let $\Delta_0$ and $N_{\delta,b,M_{\bm{\mu}_0}}$ be the thresholds specified in Assumption \ref{bounded-solutions} and Theorem \ref{main-convergence-thm}, respectively. Then, if $\Delta>\max\{\Delta_0,  N_{\delta,b,M_{\bm{\mu}_0}}\}$, $\bm{\mu}^*_t$ defined in Remark \ref{explain-converge} converges in expectation to $S_{\bm{w}^*,\delta}:=\{\bm{w}\in\mathbb{R}^d | |w_i-w_i^*|<\delta, i\in\{1,2,...,d\}\}$ as $t\rightarrow \infty$. The iteration complexity is $O_{\Delta}((d\varepsilon^{-1})^{2/(1-2\gamma)})$ if we set the learning rate $\alpha_t=(t+1)^{-(1/2+\gamma)}$, where $\gamma\in (0,1/2)$. The dependence of $O_{\Delta}$ on $\Delta$ can be removed if we further assume that $M_f\leq 0$.
\end{corollary}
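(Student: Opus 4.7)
The plan is to stitch the two subsections together: Corollary~\ref{conv-rate} and Remark~\ref{explain-converge} give convergence of $\bm{\mu}^*_t$ (in expectation) to a stationary point of the \emph{target} surrogate $F_{N_0+\Delta,b}$, and Theorem~\ref{main-convergence-thm} then places every such stationary point inside the coordinate-wise $\delta$-box $S_{\bm{w}^*,\delta}$ around $\bm{w}^*$. The iteration complexity is imported verbatim from Corollary~\ref{conv-rate}, and the $\Delta$-independence under $M_f\le 0$ from Corollary~\ref{Delta-irrelevant}.

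First, I would invoke Corollary~\ref{conv-rate} with the stated learning rate $\alpha_t=(t+1)^{-(1/2+\gamma)}$. Assumption~\ref{lr-assumption} is satisfied by this choice and by the hypothesis on $\phi_t$, so for any $\varepsilon>0$ the corollary yields $\min_{t\in\{\lfloor T/2\rfloor,\dots,T\}}\mathbb{E}[\|\nabla F_{N_t,\sigma_t}(\bm{\mu}_{t-1})\|^2]\le\varepsilon$ as soon as $T=O_{\Delta}((d\varepsilon^{-1})^{2/(1-2\gamma)})$; this already gives the quantitative iteration complexity part of the claim. Corollary~\ref{Delta-irrelevant} strips the $\Delta$ from the constant whenever $M_f\le 0$.

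Second, I would make the limiting step in Remark~\ref{explain-converge} precise. Under Assumption~\ref{lr-assumption}(1), $N_t=N_0+\sum_{s\le t}\phi_s\Delta\to N_0+\Delta$; by construction $\sigma_t=\sigma_0\beta^t+b\to b$. Using the closed-form gradient (\ref{F-gradient}) together with Assumption~\ref{f-bound} and Assumption~\ref{Lipschitz-smooth}, the map $(N,\sigma,\bm{\mu})\mapsto\nabla F_{N,\sigma}(\bm{\mu})$ is jointly continuous, and Assumption~\ref{bounded-solutions} keeps $\{\bm{\mu}_t\}$ inside the compact ball $\{\|\bm{\mu}\|\le M_{\bm{\mu}_0}\}$ whenever $\Delta>\Delta_0$. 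Hence $\nabla F_{N_t,\sigma_t}(\bm{\mu})\to\nabla F_{N_0+\Delta,b}(\bm{\mu})$ uniformly over this compact set, so the bound from Step~1 transfers to $\mathbb{E}[\|\nabla F_{N_0+\Delta,b}(\bm{\mu}^*_T)\|^2]\to 0$, i.e., $\bm{\mu}^*_T$ converges in expectation to the stationary set of $F_{N_0+\Delta,b}$.

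Third, I would apply Theorem~\ref{main-convergence-thm} with $\sigma=b$ and $M=M_{\bm{\mu}_0}$. The hypothesis $\Delta>N_{\delta,b,M_{\bm{\mu}_0}}$ guarantees $N_0+\Delta>N_{\delta,b,M_{\bm{\mu}_0}}$, so the theorem forces every stationary point $\bm{\mu}$ of $F_{N_0+\Delta,b}$ inside the ball to satisfy $|\mu_i-w_i^*|\le\delta$ for all $i$: any coordinate violating this would make the corresponding partial derivative strictly signed, contradicting stationarity. Combining this with Step~2 gives convergence in expectation of $\bm{\mu}^*_t$ to $S_{\bm{w}^*,\delta}$, which is exactly the claim.

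The hardest part is Step~2, the passage from the finite-horizon bound on the \emph{drifting} surrogate $F_{N_t,\sigma_t}$ to a statement about the \emph{fixed} limit surrogate $F_{N_0+\Delta,b}$: one must simultaneously control the drift of $(N_t,\sigma_t)$, the stochastic gradient noise, and the possible unboundedness of the iterates, which is precisely what Assumption~\ref{bounded-solutions} is introduced to avoid. Steps~1 and~3 are essentially bookkeeping once the assumptions of the earlier theorems are checked against the hypotheses of the corollary.
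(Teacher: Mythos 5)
Your proposal is correct and follows essentially the same three-step path as the paper's own (very terse) proof: Corollary~\ref{conv-rate} plus Remark~\ref{explain-converge} for convergence to a stationary point of $F_{N_0+\Delta,b}$, Theorem~\ref{main-convergence-thm} (applied at $\sigma=b$, $M=M_{\bm{\mu}_0}$) to locate those stationary points in $S_{\bm{w}^*,\delta}$, and Corollary~\ref{Delta-irrelevant} to drop the $\Delta$-dependence when $M_f\le 0$. Your Step~2 is in fact more careful than the paper, which only asserts the uniform-convergence passage in Remark~\ref{explain-converge}; making explicit the joint continuity of $(N,\sigma,\bm{\mu})\mapsto\nabla F_{N,\sigma}(\bm{\mu})$ and the compactness furnished by Assumption~\ref{bounded-solutions} is a worthwhile addition, not a deviation.
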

\begin{proof}
Corollary \ref{conv-rate} and Remark \ref{explain-converge} imply that $\bm{\mu}^*_t$ converges to a stationary point of $F_{N_0+\Delta,b}$. Theorem \ref{main-convergence-thm} implies that any stationary point approached by $\bm{\mu}^*_t$ lie in the neighborhood $S_{\bm{w}^*,\delta}$. Corollary \ref{Delta-irrelevant} shows that the dependence of the iteration complexity on $\Delta$ can be removed if $M_f\leq0$.
\end{proof}

\section{Experiment}
In this section, we evaluate our methods and focus on synthetic benchmarks—specifically phase retrieval and two-layer ReLU networks training—where the landscape geometry is well-characterized by existing theoretical literature (\cite{pmlr-safran18a, bandeira2014, Mondelli2018,ma2021spectral}). Standard benchmarks like MNIST or CIFAR-10 often obscure optimization failure modes due to the inherent redundancies of high-dimensional over-parameterization. By targeting limited-sample regimes in phase retrieval and under-parameterized settings in neural network training, we subject our algorithm to a rigorous stress test within landscapes known to harbor spurious local minima. This controlled setting more rigorously tests the effectiveness of power transformation and Gaussian smoothing by isolating their optimization behavior — something large-scale, generalization-oriented datasets cannot do.

We compare our performance with several widely used first-order optimizers, including the mini-batch SGD (\cite{bottou2010large}), SGD with Momentum (\cite{SGDMomentum2013}), Adam (\cite{kingma2017adam}), AdamW (\cite{Loshchilov2017}), and Sharpness-Aware Minimization (SAM, \cite{foret2021SAM}). Additionally, we include the homotopy-based method of SLGHr (\cite{Iwakiri2022}) as a benchmark for ablation study.

\subsection{Phase Retrieval}
Given the unseen phase $\bm{x}_0\in\mathbb{R}^d$ and measurements of the form $y=\langle \bm{a}, \bm{x}_0 \rangle$, the phase retrieval (PR) problem seeks to recover $\bm{x}_0\in\mathbb{R}^d$ from these observations. Beyond this classical formulation, modern phase retrieval research has increasingly incorporated sophisticated artificial intelligence paradigms, including deep generative priors (\cite{Hand2018}), unsupervised teacher-student distillation protocols (\cite{quan2023unsupervised}), and conditional generative models (\cite{Uelwer2023}).

The classical phase retrieval problem is typically formulated as the following optimization task (e.g., \cite{ma2021spectral}), which is notorious for exhibiting multiple spurious local minima:
\begin{equation}
\label{PR-objective}
\min_{\bm{x}\in\mathbb{R}^d} \mathcal{L}(\bm{x}):= \mathbb{E}_{\bm{a}\sim\mathcal{N}(\bm{0},I_d)} \left[( \langle \bm{a}, \bm{x} \rangle^2-\langle\bm{a},\bm{x}_0\rangle^2 )^2\right].
\end{equation}
In practice, a batch of observed signals, $\{\bm{a}_n\}_{n=1}^N$ and $\{y_n\}_{n=1}^N$ with $y_n = \langle \bm{a}_n, \bm{x} \rangle$, are usually available. The sample-based objective is constructed as
\begin{equation}
\label{PR-objective-est}
\min_{\bm{x}\in\mathbb{R}^d} \hat{\mathcal{L}}(\bm{x}):=\frac{1}{N}\sum_{n=1}^N ( \langle \bm{a}_n, \bm{x} \rangle^2-y_n^2 )^2.
\end{equation}
In our simulated experiments, we use the following standard metric (\cite{Hand2018,zhang2017nonconvex}) for measuring the quality of a given solution $\bm{x}$:
\begin{equation}
\label{PR-metric}
\mathcal{M}(\bm{x}):= \min(\|\bm{x}-\bm{x}_0\|_2, \|\bm{x}+\bm{x}_0\|_2)/\|\bm{x}_0\|_2,
\end{equation}
where $\|\cdot\|_2$ denotes the $L_2$-norm.

Under each set of selected values of $d$ and $N$, we perform 100 experiments. In each experiment, we randomly generate a true solution $\bm{x}_0$ from $\mathcal{N}(\bm{0},0.1I_d)$, an initial solution candidate $\bm{x}^{(0)}$ from $\mathcal{N}(\bm{0},I_d/\sqrt{d})$, and $N$ independent samples $\{\bm{a}_n\}_{n=1}^N$ from $\mathcal{N}(\bm{0},I_d)$. Then, with these values, we apply each compared optimization method to solve (\ref{PR-objective-est}), where each solution update uses a mini-batch of $\{\bm{a}_n\}_{n=1}^N$. Let $\{\bm{x}^{(t)}\}_{t=1}^T$ denote the solution candidates produced by the performed.  If $\min_{t} \mathcal{M}(\bm{x}^{(t)})\leq 0.001$, we consider that the true solution $\bm{x}_0$ is recovered and call the method to be successful for this experiment. Define $\bm{x}_i^*=\arg\min_{t}\mathcal{M}(\bm{x}^{(t)})$, where $i$ denotes the experiment index. Average results obtained over the 100 experiments are reported in Table \ref{PR-table1}, where we also report the two-sided $t$-test result to check if our algorithm produces a significantly lower $\mathcal{M}(\bm{x}^*)$ than other algorithms. 

\begin{table}[htb!]
\caption{Performances on Minimizing (\ref{PR-objective-est}). The metric $\mathcal{M}$ is defined in (\ref{PR-metric}) and $\bar{\mathcal{M}}$ denotes the average. SR refers to the success rate out of the 100 experiments, $t$ refers to the statistic of the two-sided $t$-test on the null hypothesis $H_0:\;\mathcal{M}(\bm{x}^*)-\mathcal{M}(\bm{y}^*)=0$, where $\bm{x}^*$ is the optimal solution produced by our algorithm, and $\bm{y}^*$ is produced by the compared algorithm. Time refers to the average number of iterations taken to the best solution. The iteration number $T$ and sample number $N$ are fixed at $60,000$ and $400$, respectively. Under the most difficult case where $d=200$, we relax the threshold for defining a success from $0.001$ to $0.05$.}
\label{PR-table1}
\centering%
\begin{tabular}{ p{0.35cm} | p{2.0cm}  p{0.55cm} p{0.5cm}  p{0.75cm} p{0.75cm}   }
\midrule
$d$ & Algorithm  & $\bar{\mathcal{M}}$ & SR. & \;\;\;$t$ & Time \\
\toprule
\multirow{4}{*}{$100$} & \textbf{Our Algo}.  &  0.07 &  $92\%$   & \;\; - & $56,397$ \\
                                 & SGD    & 0.51   &  $54\%$   &   $-7$      & $16,257$ \\  
                                 & SLGHr & 0.80   &  $0\%$     &   $-23$    & $52,849$ \\
                                 & SAM    &  0.68  &   $41\%$  &    $-9$     &  $24,524$ \\ 
                                & SGD-Mom  &  0.49  &  $56\%$    & $-7$ & $21,572$  \\  
                                & AdamW    &  0.60 &  $47\%$   &  $-8$       & $14,653$ \\
                                & Adam    &  0.65 &  $43\%$   &     $-9$    & $10,869$ \\
                                
\hline
\multirow{4}{*}{$150$} & \textbf{Our Algo}.  &  0.23 &  $73\%$   & \;\; - & $47,974$ \\
                                & SLGHr                    &  0.82 &  $0\%$     &$-13$ & $45,047$ \\
                               &  SAM                       &  1.08  &  $0\%$    &$-19$  & $2,812$  \\
                               & SGD-Mom               &  1.11  &  $2\%$    & $-20$ & $1,515$  \\  
                               & AdamW                   &  1.10 &  $4\%$   &  $-19$     & $4,629$ \\
                                & Adam                     & 1.08   &  $5\%$   &  $-18$     & $1,922$ \\
                                & SGD                      & 1.07   &  $6\%$   &    $-18$   & $4,460 $ \\
\hline
\multirow{7}{*}{$200$} & \textbf{Our Algo}.  &  0.67 &  $23\%$   & \;\; - & $18,175$ \\
                                & SLGHr    &  1.02 &  $0\%$   &  $-9$     & $21,942$ \\
                                & SAM    &  1.14 &  $0\%$   &  $-12$     & $676$ \\
                                & SGD-Mom    &  1.14 &  $0\%$   &  $-12$     & $91$ \\
                                & AdamW    & 1.15   &  $ 0\%$   &  $-12$     & $1,507$ \\
                                & Adam    & 1.15   &  $ 0\%$   &  $-12$     & $1,518$ \\
                                & SGD    & 1.13   &  $0\%$   &    $-12$   & $84 $ \\
\bottomrule
\end{tabular}
\end{table}

It is well-known that the PR problem exhibits a transition into difficulty when the measurement-to-dimension ratio $N/d$ decreases. In particular, prior work shows that the recovery is impossible when $N/d < 1$, while above this threshold spectral and first-order methods begin to succeed \cite{Mondelli2018}. However, in the regime where $N/d\in [2,4]$, recovery is theoretically possible but empirically challenging, as the non-convex loss landscape is poorly conditioned \cite{bandeira2014,ma2021spectral}. Motivated by these observations, we focus our experiments on this information-scarce regime by fixing $N=400$ and varying $d$ so that $N/d \in [2,4]$, where algorithmic performance is most sensitive to optimization strategy.

The experiment results in Table \ref{PR-table1} show that our algorithm outperforms all the compared algorithms. The $t$ statistics indicate that the results are statistically significant. 

Our results align with the established theory of phase retrieval: when $N/d$ is small (e.g, $N/d\in [2,4)$), the optimization geometry is fragile and recovery is sensitive to the choice of algorithm. Beyond this regime, the non-convex loss landscape becomes benign—characterized by the absence of spurious local minima and the presence of negative curvature at all saddle points (\cite{sun2018geometric}), and hence, the performance differences between various optimization methods largely vanish.

\subsection{Two-layer Neural Network Training}
We evaluate our algorithm using the experimental setup for training two-layer neural networks (without bias terms) as described in \cite{pmlr-safran18a}. Specifically, given a set of orthonormal ground-truth weight vectors $\{\bm{v}_i\}_{i=1}^k$, the objective is to solve $\min_{\bm{W}\in\mathbb{R}^{d\times n}} \mathcal{L}(\bm{W})$, where
\begin{equation}
\label{twolayer-loss}
\mathcal{L}(\bm{W})=\mathbb{E}_{\bm{x}\sim \mathcal{N}(\bm{0},I_k)}\left[\frac{1}{2}\left(\sum_{j=1}^n[\bm{w}_j'\bm{x}]_+ - \sum_{i=1}^k[\bm{v}_i'\bm{x}]_+\right)^2\right],
\end{equation} 
$\bm{W}:=[\bm{w}_1,...,\bm{w}_n]$ represents the learnable weights and $[z]_+=\max(0,z)$ is the ReLU activation. \cite{pmlr-safran18a} demonstrate that locating the global optimum is particularly challenging in the exactly-parameterized case $k=n\in[16,20]$, especially for $n\in[16,20]$, due to the prevalence of spurious local minima. While over-parameterization ($n>k$) is known to significantly reduce this difficulty, we choose to address the more challenging regimes. We evaluate our algorithms across four settings: the exactly-parameterized cases where $k=n=20$ and $k=n=25$, as well as the under-parameterized cases where $(k,n)=(20,19)$ and $(k,n)=(25,24)$.

With each pair of selected values for $(k,n)$, we perform 100 experiments on each of the compared optimization algorithms. In each experiment, we randomly generate a set $\{\bm{v}_i\}_{i=1}^k$ of orthonormal vectors in $\mathbb{R}^k$, and sample a batch of 30 $\bm{x}$'s from $\mathcal{N}(\bm{0},I_k)$ for each update of the algorithm's solution. All the algorithms share the same initial solution candidate. Following \cite{pmlr-safran18a}, we use the test error as the evaluation metric, which is computed with a set of 5,000 test samples of $\bm{x}$:
\begin{equation}
\label{test-error}
e(\bm{W}) = \frac{1}{5000} \sum_{l=1}^{5000}\frac{1}{2} \left(\sum_{j=1}^n[\bm{w}_j'\bm{x}_l]_+ - \sum_{i=1}^k[\bm{v}_i'\bm{x}_l]_+\right)^2.
\end{equation}
For each algorithm, we report statistics about the optimal solution $\bm{W}^*:=\arg\min_{\tau=1}^T e(\bm{W}_\tau)$ found in the solution-update process. The pre-selected total number $T$ of iterations is set as $35,000$ each method (we choose a population size of 1 for our method). Again, we consider an algorithm successful (i.e., the global minimum is found) if $e(\bm{W}^*)\leq 0.001$. 

\begin{table}[htb!]
\caption{Performances on Minimizing (\ref{twolayer-loss}). 100 experiments are performed for each algorithm. $e$ represents the average of $\{e(\bm{W}^*_i)\}_{i=1}^{100}$, where $\bm{W}^*_i$ represents the optimal solution found in the $i^{th}$ experiment, and the metric $e(\bm{W})$ is defined in (\ref{test-error}). SR refers to the success rate out of the 100 experiments, $t$ refers to the statistic of the two-sided $t$-test on $e(\bm{W}^*)<e(\bm{W}_c^*)$, where $\bm{W}^*$ is produced by our algorithm and $\bm{W}^*_c$ is produced by the compared algorithm.}

\label{TwoLayer-table1}
\centering%
\begin{tabular}{ p{0.3cm}  p{0.45cm}| p{2.0cm}  p{0.4cm} p{0.7cm}  p{0.70cm} p{0.75cm}   }
\midrule
$n$ &$k$ & Algorithm  & SR. & $e$  & \;\;\;$t$ & Time \\
\toprule
\multirow{4}{*}{$18$} &\multirow{4}{*}{$19$}& \textbf{Our Algo}. &  $0\%$ &  $\mathbf{0.058}$    & -\;\; & $26,587$ \\
                                &                                   & SLGHr  &  $0\%$  &   0.073    &   $-8 $     & $32,212 $ \\
                                &                                   & SGD    &  $0\%$   &  0.073    &  $-11$      & $ 32,452$ \\
                                &                                   & SGD-Mom  &  $0\%$  &  0.074    &   $-11$     & $23,745$ \\
                                &                                   & SAM  &  $0\%$  &  0.078    &   $-17$     & $24,902$ \\
                                &                                   & AdamW  &  $0\%$   &  0.079   &   $-10$     & $32,050$ \\
                                &                                   & Adam     &  $0\%$  &  0.081    &   $-11$     & $32,707$ \\
\hline
\multirow{4}{*}{$19$} &\multirow{4}{*}{$20$}& \textbf{Our Algo}. &  $0\%$ &  $\mathbf{0.061}$   & \;\; - & $28,175$ \\
                                &                                   & SLGHr  &  $0\%$  &  0.077    &   $-8$     & $31,725$ \\
                                &                                   & SGD-Mom  &  $0\%$  &  0.081    &   $-9$     & $24,217$ \\
                                &                                   & SAM  &  $0\%$  &  0.083    &   $-11$     & $32,647$ \\
                                &                                   & AdamW  &  $0\%$  &  0.085   &   $-9$     & $32,929$ \\
                                &                                   & Adam  &  $0\%$   &  0.082   &   $-8$     & $32,978$ \\
                                &                                   & SGD   &  $0\%$  &  0.078     &  $-9$      & $32,036$ \\
\hline
\multirow{4}{*}{$19$} &\multirow{4}{*}{$19$}& SLGHr  &  $10\%$  &  0.026     &   $1.4 $     & $32,554 $ \\
                                &                                   & \textbf{Our Algo}. &  $7\%$  &  $\mathbf{0.029}$    & \;\; - & $30,481$ \\
                                &                                   & SGD  &  $11\%$  &  0.029     &  $-0.1$      & $31,327$ \\
                                &                                   & AdamW  &  $6\%$  &  0.031    &   $-1.0$     & $30,176$ \\
                                &                                   & Adam   &  $3\%$  &  0.032   &   $-1.3$     & $30,437$ \\
                                &                                   & SAM  &  $7\%$  &  0.032    &   $-1.2$     & $31,664$ \\
                                &                                   & SGD-Mom  &  $5\%$  & 0.035    &   $-2.3$     & $33,852$ \\
\hline
\multirow{4}{*}{$20$} &\multirow{4}{*}{$20$}& SLGHr  &  $10\%$  &  0.027     &   $1.2 $     & $32,127 $ \\
                                &                                   & \textbf{Our Algo}. &  $4\%$  &  $\mathbf{0.030}$    & \;\; - & $27,972$ \\
                                &                                   & Adam   &  $3\%$  &  0.033   &   $-1.0$     & $30,598$ \\
                                &                                   & SAM  &  $5\%$  &  0.033    &   $-1.1$     & $31,606$ \\
                                &                                   & AdamW  &  $6\%$  &  0.035    &   $-1.8$     & $31,294$ \\
                                &                                   & SGD  &  $2\%$  &  0.035     &  $-1.8$      & $32,479$ \\
                                &                                   & SGD-Mom  &  $6\%$  & 0.037    &   $-2.7$     & $21,079$ \\

\bottomrule
\end{tabular}
\footnotetext{Total number of generations: 1000. $\bm{m}_1=[-.5,-.5]$. $\bm{m}_2=[.5,.5]$.}
\footnotetext{\textbf{Notation.} $\mathcal{P}_0$ denotes the initial population. For any real random vector $\bm{z}$ and any two real scalar $a<b$, $\bm{z} \sim$ uniform$[a, b]$ denotes that each entry of $\bm{z}$ is sampled uniformly from the interval $[a, b]$.}
\end{table}

The results are summarized in Table \ref{TwoLayer-table1}. While our algorithm exhibits comparable or slightly improved performance relative to most baseline methods in the exactly-parameterized regime ($n=k$), these gains are generally not statistically significant. In contrast, in under-parameterized settings ($n<k$), our method demonstrates a clear and consistent advantage, achieving substantially lower loss values with statistically significant margins. These results highlight the superior landscape navigation capabilities of our approach; specifically, our algorithm consistently identifies deeper local minima in scenarios where capacity constraints preclude a zero-loss solution. This empirical behavior aligns with our theoretical framework, which suggests that the power transformation effectively reweights the optimization landscape to favor more optimal local basins.

\subsection{Explanation of Results}
We hypothesize that the consistent outperformance of our approach stems from a dual-mechanism synergy between the power transform and landscape smoothing. Specifically, the power transformation reshapes the remaining topography to reweight the 'attractiveness' of various basins, allowing the iterates to prioritize deeper, more optimal valleys. Complementing this, the smoothing component serves to regularize the optimization surface by effectively marginalizing shallow, spurious local minima that typically trap first-order methods in data-sparse or under-parameterized regimes. Together, these mechanisms enable our algorithm to bypass high-loss noise and converge toward superior local basins that are often inaccessible to standard optimizers. Interestingly, the convergence to identical success rates in the exactly-parameterized case ($n=k$) suggests that while this dual strategy is highly proficient at navigating 'cluttered' environments, it still encounters the fundamental symmetry-breaking hurdles inherent in perfect recovery tasks. Ultimately, these findings position our method as a robust solution for non-convex problems where model capacity is constrained or sampling is limited.

\section{Conclusion}
In this work, we propose a novel global optimization framework that integrates a progressive power-transformation mechanism within a homotopy scheme, and we establish its convergence to the global optimum with provable iteration complexity. We evaluate the proposed method on constrained nonconvex landscapes arising from limited data availability—such as phase retrieval in the regime ($n/d \in [2,4]$)—and limited model capacity, as in under-parameterized neural networks with ($n < k$).

In the under-parameterized neural network setting, where exact recovery is mathematically impossible, our method consistently converges to solutions with significantly lower loss than other first-order optimizers. A similar advantage is observed in phase retrieval, particularly near the information-theoretic limit (n/d = 2) \cite{wang2019generalized}. Together, these results suggest that the proposed combination of power transformation and Gaussian smoothing provides an effective mechanism for navigating degenerate non-convex landscapes, with potential implications for reducing model size and computational demands in large-scale deep learning systems.

\bibliography{Literature}

@article{wang2019generalized,
  title={Generalized phase retrieval: measurement number, matrix recovery and beyond},
  author={Wang, Yang and Xu, Zhiqiang},
  journal={Applied and Computational Harmonic Analysis},
  volume={47},
  number={2},
  pages={423--446},
  year={2019},
  publisher={Elsevier}
}

@inproceedings{bottou2010large,
  title={Large-scale machine learning with stochastic gradient descent},
  author={Bottou, L{\'e}on},
  booktitle={Proceedings of COMPSTAT'2010: 19th International Conference on Computational StatisticsParis France, August 22-27, 2010 Keynote, Invited and Contributed Papers},
  pages={177--186},
  year={2010},
  organization={Springer}
}

@InProceedings{SGDMomentum2013,
  title = 	 {On the importance of initialization and momentum in deep learning},
  author = 	 {Sutskever, Ilya and Martens, James and Dahl, George and Hinton, Geoffrey},
  booktitle = 	 {Proceedings of the 30th International Conference on Machine Learning},
  pages = 	 {1139--1147},
  year = 	 {2013},
  editor = 	 {Dasgupta, Sanjoy and McAllester, David},
  volume = 	 {28},
  number =       {3},
  series = 	 {Proceedings of Machine Learning Research},
  address = 	 {Atlanta, Georgia, USA},
  month = 	 {17--19 Jun},
  publisher =    {PMLR},
}

@article{Loshchilov2017,
  title={Decoupled weight decay regularization},
  author={Loshchilov, Ilya and Hutter, Frank},
  journal={arXiv preprint arXiv:1711.05101},
  year={2017}
}

@misc{kingma2017adam,
      title={Adam: A Method for Stochastic Optimization}, 
      author={Diederik P. Kingma and Jimmy Ba},
      year={2017},
      eprint={1412.6980},
      archivePrefix={arXiv},
      primaryClass={cs.LG},
      url={https://arxiv.org/abs/1412.6980}, 
}

@article{sun2018geometric,
  title={A geometric analysis of phase retrieval},
  author={Sun, Ju and Qu, Qing and Wright, John},
  journal={Foundations of Computational Mathematics},
  volume={18},
  number={5},
  pages={1131--1198},
  year={2018},
  publisher={Springer}
}

@article{bandeira2014,
  title={Saving phase: Injectivity and stability for phase retrieval},
  author={Bandeira, Afonso S and Cahill, Jameson and Mixon, Dustin G and Nelson, Aaron A},
  journal={Applied and Computational Harmonic Analysis},
  volume={37},
  number={1},
  pages={106--125},
  year={2014},
  publisher={Elsevier}
}

@inproceedings{Mondelli2018,
  title={Fundamental limits of weak recovery with applications to phase retrieval},
  author={Mondelli, Marco and Montanari, Andrea},
  booktitle={Conference On Learning Theory},
  pages={1445--1450},
  year={2018},
  organization={PMLR}
}

@article{ma2021spectral,
  title={Spectral method for phase retrieval: an expectation propagation perspective},
  author={Ma, Junjie and Dudeja, Rishabh and Xu, Ji and Maleki, Arian and Wang, Xiaodong},
  journal={IEEE Transactions on Information Theory},
  volume={67},
  number={2},
  pages={1332--1355},
  year={2021},
  publisher={IEEE}
}

@article{Uelwer2023,
  author  = {Tobias Uelwer and Sebastian Konietzny and Alexander Oberstrass and Stefan Harmeling},
  title   = {Learning Conditional Generative Models for Phase Retrieval},
  journal = {Journal of Machine Learning Research},
  year    = {2023},
  volume  = {24},
  number  = {332},
  pages   = {1--28},
}

@inproceedings{quan2023unsupervised,
  title={Unsupervised deep learning for phase retrieval via teacher-student distillation},
  author={Quan, Yuhui and Chen, Zhile and Pang, Tongyao and Ji, Hui},
  booktitle={Proceedings of the AAAI Conference on Artificial Intelligence},
  volume={37},
  number={2},
  pages={2128--2136},
  year={2023}
}

@inproceedings{Hand2018,
 author = {Hand, Paul and Leong, Oscar and Voroninski, Vlad},
 booktitle = {Advances in Neural Information Processing Systems},
 editor = {S. Bengio and H. Wallach and H. Larochelle and K. Grauman and N. Cesa-Bianchi and R. Garnett},
 pages = {},
 publisher = {Curran Associates, Inc.},
 title = {Phase Retrieval Under a Generative Prior},
 volume = {31},
 year = {2018}
}

@article{zhang2017nonconvex,
  title={A nonconvex approach for phase retrieval: Reshaped wirtinger flow and incremental algorithms},
  author={Zhang, Huishuai and Liang, Yingbin and Chi, Yuejie},
  journal={Journal of Machine Learning Research},
  volume={18},
  number={141},
  pages={1--35},
  year={2017}
}

@inproceedings{aminian2025,
  title={Generalization and robustness of the tilted empirical risk},
  author={Aminian, Gholamali and Asadi, Amir R and Li, Tian and Beirami, Ahmad and Reinert, Gesine and Cohen, Samuel N},
  booktitle={Forty-second International Conference on Machine Learning},
  year={2025}
}

@InProceedings{Lin2023,
  title = 	 {Continuation Path Learning for Homotopy Optimization},
  author =       {X. Lin and Z. Yang and X. Zhang and Q. Zhang},
  booktitle = 	 {Proceedings of the 40th International Conference on Machine Learning},
  pages = 	 {21288--21311},
  year = 	 {2023},
  volume = 	 {202},
  publisher =    {PMLR},
  url = 	 {https://proceedings.mlr.press/v202/lin23n.html},
}

@article{fei2021exponential,
  title={Exponential bellman equation and improved regret bounds for risk-sensitive reinforcement learning},
  author={Fei, Yingjie and Yang, Zhuoran and Chen, Yudong and Wang, Zhaoran},
  journal={Advances in neural information processing systems},
  volume={34},
  pages={20436--20446},
  year={2021}
}

@article{fei2020risk,
  title={Risk-sensitive reinforcement learning: Near-optimal risk-sample tradeoff in regret},
  author={Fei, Yingjie and Yang, Zhuoran and Chen, Yudong and Wang, Zhaoran and Xie, Qiaomin},
  journal={Advances in Neural Information Processing Systems},
  volume={33},
  pages={22384--22395},
  year={2020}
}

@InProceedings{Dvijotham2014,
  title = 	 {Universal Convexification via Risk-Aversion},
  author =      {K. Dvijotham and M. Fazel and E. Todorov},
  booktitle = 	 {Proceedings of the Thirtieth Conference on Uncertainty in Artificial Intelligence},
  pages = 	 {},
  year = 	 {2014},
  volume = 	 {},
  series = 	 {},
  month = 	 {},
  publisher =    {},
  url = 	 https://www.auai.org/uai2014/proceedings/individuals/121.pdf}

@misc{GSPowerHP,
      title={Power Homotopy for Zeroth-Order Non-Convex Optimizations}, 
      author={Chen Xu},
      year={2025},
      eprint={2511.13592},
      archivePrefix={arXiv},
      primaryClass={math.OC},
}

@inproceedings{Iwakiri2022,
 author = {H. Iwakiri and Y. Wang and S. Ito and A. Takeda},
 booktitle = {Advances in Neural Information Processing Systems},
 pages = {7065--7076},
 publisher = {Curran Associates, Inc.},
 title = {Single Loop Gaussian Homotopy Method for Non-convex Optimization},
 url = {https://proceedings.neurips.cc/paper_files/paper/2022/file/2e622ac74f66df03b686a12e2e0e4424-Paper-Conference.pdf},
 volume = {35},
 year = {2022}
}

@techreport{Mobahi2012,
  author = {H. Mobahi and Y. Ma},
  title = {Gaussian Smoothing and Asymptotic Convexity},
  institution = {University of Illinois at Urbana-Champaign},
  year = {2012}
}

@inproceedings{MobahiFisher2015,
 author = {H. Mobahi and J. Fisher III},
 booktitle = {Proceedings of the AAAI Conference on Artificial Intelligence},
 pages = {1205--1211},
  title = {A Theoretical Analysis of Optimization by Gaussian Continuation},
 doi = {10.1609/aaai.v29i1.9356},
 volume = {29},
 year = {2015}
}

@InProceedings{Hazan2016, 
title =  {On Graduated Optimization for Stochastic Non-Convex Problems},  
author =  {E. Hazan and K. Y. Levy and S. Shalev-Shwartz},  
booktitle =  {Proceedings of The 33rd International Conference on Machine Learning},  
pages =  {1833--1841},  
year =  {2016},  
volume =  {48},  
url =  {https://proceedings.mlr.press/v48/hazanb16.html},
}

@InProceedings{Kleinberg2018,
  title = 	 {An Alternative View: When Does {SGD} Escape Local Minima?},
  author =       {Kleinberg, Bobby and Li, Yuanzhi and Yuan, Yang},
  booktitle = 	 {Proceedings of the 35th International Conference on Machine Learning},
  pages = 	 {2698--2707},
  year = 	 {2018},
  editor = 	 {Dy, Jennifer and Krause, Andreas},
  volume = 	 {80},
  series = 	 {Proceedings of Machine Learning Research},
  month = 	 {10--15 Jul},
  publisher =    {PMLR},
}

@article{GongBisht2025, 
title={Conformal Prediction for Partial Label Learning}, 
volume={39}, 
url={https://ojs.aaai.org/index.php/AAAI/article/view/33853}, 
DOI={10.1609/aaai.v39i16.33853}, 
number={16}, 
journal={Proceedings of the AAAI Conference on Artificial Intelligence}, 
author={Gong, Xiuwen and Bisht, Nitin and Xu, Guandong}, 
year={2025},
pages={16862-16870} 
}

@inproceedings{WangWu2025,
 author = {Wang, Wei and Wu, Dong-Dong and Wang, Jindong and Niu, Gang and Zhang, Min-Ling and Sugiyama, Masashi},
 booktitle = {International Conference on Representation Learning},
 editor = {Y. Yue and A. Garg and N. Peng and F. Sha and R. Yu},
 pages = {86077--86101},
 title = {Realistic Evaluation of Deep Partial-Label Learning Algorithms},
 url = {https://proceedings.iclr.cc/paper_files/paper/2025/file/d5b7afe24b3b891138105a5877b7decd-Paper-Conference.pdf},
 volume = {2025},
 year = {2025}
}

@article{lv2024makes,
  title={What makes partial-label learning algorithms effective?},
  author={Lv, Jiaqi and Liu, Yangfan and Xia, Shiyu and Xu, Ning and Xu, Miao and Niu, Gang and Zhang, Min-Ling and Sugiyama, Masashi and Geng, Xin},
  journal={Advances in Neural Information Processing Systems},
  volume={37},
  pages={89513--89534},
  year={2024}
}

@article{sabato2013,
  title={Distribution-dependent sample complexity of large margin learning},
  author={Sabato, Sivan and Srebro, Nathan and Tishby, Naftali},
  journal={The Journal of Machine Learning Research},
  volume={14},
  number={1},
  pages={2119--2149},
  year={2013},
}

@article{zeng2022,
  title={Generalization bounds with minimal dependency on hypothesis class via distributionally robust optimization},
  author={Zeng, Yibo and Lam, Henry},
  journal={Advances in Neural Information Processing Systems},
  volume={35},
  pages={27576--27590},
  year={2022}
}

@inproceedings{cabannnes2020,
  title={Structured prediction with partial labelling through the infimum loss},
  author={Cabannnes, Vivien and Rudi, Alessandro and Bach, Francis},
  booktitle={International Conference on Machine Learning},
  pages={1230--1239},
  year={2020},
  organization={PMLR}
}

@inproceedings{foret2021SAM,
  title={Sharpness-aware Minimization for Efficiently Improving Generalization},
  author={For{\'e}t, Pierre and Kleiner, Ariel and Mobahi, Hossein and Neyshabur, Behnam},
  booktitle={International Conference on Learning Representations},
  year={2021}
}

@inproceedings{perdomo2020performative,
  title={Performative prediction},
  author={Perdomo, Juan and Zrnic, Tijana and Mendler-D{\"u}nner, Celestine and Hardt, Moritz},
  booktitle={International Conference on Machine Learning},
  pages={7599--7609},
  year={2020},
  organization={PMLR}
}

@article{tong2020neyman,
  title={Neyman-Pearson classification: parametrics and sample size requirement},
  author={Tong, Xin and Xia, Lucy and Wang, Jiacheng and Feng, Yang},
  journal={Journal of Machine Learning Research},
  volume={21},
  number={12},
  pages={1--48},
  year={2020}
}

@article{LiBeirami2023,
  author  = {Tian Li and Ahmad Beirami and Maziar Sanjabi and Virginia Smith},
  title   = {On Tilted Losses in Machine Learning: Theory and Applications},
  journal = {Journal of Machine Learning Research},
  year    = {2023},
  volume  = {24},
  number  = {142},
  pages   = {1--79},
}

@article{liQian2023,
  title={Convex and non-convex optimization under generalized smoothness},
  author={Li, Haochuan and Qian, Jian and Tian, Yi and Rakhlin, Alexander and Jadbabaie, Ali},
  journal={Advances in Neural Information Processing Systems},
  volume={36},
  pages={40238--40271},
  year={2023}
}

@InProceedings{HuWu2023,
  title = 	 {Beyond {L}ipschitz Smoothness: A Tighter Analysis for Nonconvex Optimization},
  author =       {Hu, Zhengmian and Wu, Xidong and Huang, Heng},
  booktitle = 	 {Proceedings of the 40th International Conference on Machine Learning},
  pages = 	 {13652--13678},
  year = 	 {2023},
  editor = 	 {Krause, Andreas and Brunskill, Emma and Cho, Kyunghyun and Engelhardt, Barbara and Sabato, Sivan and Scarlett, Jonathan},
  volume = 	 {202},
  series = 	 {Proceedings of Machine Learning Research},
  month = 	 {23--29 Jul},
  publisher =    {PMLR},
  }

@book{benveniste2012,
  title={Adaptive algorithms and stochastic approximations},
  author={Benveniste, Albert and M{\'e}tivier, Michel and Priouret, Pierre},
  volume={22},
  year={2012},
  publisher={Springer Science \& Business Media}
}

@article{mertikopoulos2020,
  title={On the almost sure convergence of stochastic gradient descent in non-convex problems},
  author={P. Mertikopoulos and N. Hallak and A. Kavis and V. Cevher},
  journal={Advances in Neural Information Processing Systems},
  volume={33},
  pages={1117--1128},
  year={2020}
}

@article{Cutkosky2019,
  title={Momentum-based variance reduction in non-convex sgd},
  author={Cutkosky, Ashok and Orabona, Francesco},
  journal={Advances in neural information processing systems},
  volume={32},
  year={2019}
}

@article{Aydore2019,
  title={Dynamic local regret for non-convex online forecasting},
  author={Aydore, Sergul and Zhu, Tianhao and Foster, Dean P},
  journal={Advances in neural information processing systems},
  volume={32},
  year={2019}
}

@misc{gs-powerhp,
      title={Power Homotopy for Zeroth-Order Non-Convex Optimizations}, 
      author={Chen Xu},
      year={2025},
      eprint={2511.13592},
      archivePrefix={arXiv},
      primaryClass={math.OC},
      url={https://arxiv.org/abs/2511.13592}, 
}

@InProceedings{pmlr-safran18a,  
title = {Spurious Local Minima are Common in Two-Layer {R}e{LU} Neural Networks}, 
author = {Safran, Itay and Shamir, Ohad},  
booktitle =  {Proceedings of the 35th International Conference on Machine Learning},  
pages =  {4433--4441},  
year =  {2018},  
editor =  {Dy, Jennifer and Krause, Andreas},  
volume =  {80},  
series =  {Proceedings of Machine Learning Research},  
publisher =    {PMLR}
}

@InProceedings{GS-PowerOpt,  
title =  {Global Optimization with a Power-Transformed Objective and {G}aussian Smoothing},  
author =       {Xu, Chen},  
booktitle =  {Proceedings of the 42nd International Conference on Machine Learning},  
pages =  {69189--69216},  
year =  {2025},  
volume =  {267},  
series =  {Proceedings of Machine Learning Research},  
publisher =    {PMLR},  
pdf =  {https://raw.githubusercontent.com/mlresearch/v267/main/assets/xu25g/xu25g.pdf},  
url =  {https://proceedings.mlr.press/v267/xu25g.html},  
}

@article{WangGurbuzbalaban2021,
  title={Convergence rates of stochastic gradient descent under infinite noise variance},
  author={Wang, Hongjian and Gurbuzbalaban, Mert and Zhu, Lingjiong and Simsekli, Umut and Erdogdu, Murat A},
  journal={Advances in Neural Information Processing Systems},
  volume={34},
  pages={18866--18877},
  year={2021}
}
\bibliographystyle{abbrv}

\onecolumn
\section{Appendix}
\subsection{Single Loop Gaussian Homotopy (SLGH)}
\cite{Iwakiri2022} proposes two single-loop Gaussian homotopy algorithms that applies $\nabla f(\bm{x})$, SLGHd and SLGHr. These two algorithms solves the surrogate problem of
\begin{equation} 
\label{slgh-goal}
\min_{\bm{\mu}\in\mathbb{R}^d,\sigma\in\mathbb{R}^+} F(\bm{\mu},\sigma),
\end{equation}
where
\begin{align*}
F(\bm{\mu},\sigma) &= \mathbb{E}_{\bm{w}\sim\mathcal{N}(\bm{\mu},\sigma^2 I_d)}\left[\mathbb{E}_{\bm{x}\sim \mathcal{D}}[f_{\bm{w}}(\bm{x})]\right] = \mathbb{E}_{\bm{x}\sim \mathcal{D}}[ \mathbb{E}_{\bm{w}\sim\mathcal{N}(\bm{\mu},\sigma^2 I_d)}[f_{\bm{w}}(\bm{x})]].
\end{align*}

We let
\begin{align*}
\bar{F}(\bm{\mu},\sigma;\bm{x})&= \mathbb{E}_{\bm{w}\sim\mathcal{N}(\bm{\mu},\sigma^2 I_d)}[f_{\bm{w}}(\bm{x})]\\
&= \mathbb{E}_{\bm{\epsilon}\sim \mathcal{N}(\bm{0},I_d)} [f_{\bm{\mu}+\sigma \bm{\epsilon}}(\bm{x})]\\
&= \frac{1}{\sqrt{2d}}\int_{\bm{\epsilon}\sim \mathcal{N}(\bm{0},I_d)} f_{\bm{\mu}+\sigma\bm{\epsilon}}(\bm{x}) e^{-\| \bm{\epsilon} \|^2} d\bm{\epsilon}.
\end{align*}
Under the assumption of interchangeable differentiation and expectation, we have
\begin{equation} 
\label{slgh-derivative}
\begin{split}
\nabla_{\bm{\mu}}\bar{F}(\bm{\mu},\sigma;\bm{x})&=\frac{1}{\sqrt{2d}}\int_{\bm{\epsilon}\sim \mathcal{N}(\bm{0},I_d)} \nabla f_{\bm{\mu}+\sigma\bm{\epsilon}}(\bm{x}) e^{-\| \bm{\epsilon} \|^2} d\bm{\epsilon} \\
& = \mathbb{E}_{\bm{\epsilon}\sim \mathcal{N}(\bm{0},I_d)}[\nabla f_{\bm{\mu}+\sigma\bm{\epsilon}}(\bm{x})] \in\mathbb{R}^d.
\end{split}
\end{equation}
\begin{equation}
\begin{split}
\nabla_{\sigma}\bar{F}(\bm{\mu},\sigma;\bm{x})&= \frac{1}{\sqrt{2d}}\int_{\bm{\epsilon}\sim \mathcal{N}(\bm{0},I_d)} \nabla f_{\bm{\mu}+\sigma\bm{\epsilon}}(\bm{x})\cdot \bm{\epsilon} e^{-\| \bm{\epsilon} \|^2} d\bm{\epsilon}\\
&= \mathbb{E}_{\bm{\epsilon}\sim \mathcal{N}(\bm{0},I_d)}[\nabla f_{\bm{\mu}+\sigma\bm{\epsilon}}(\bm{x})\cdot\bm{\epsilon}]\in\mathbb{R}, 
\end{split}
\end{equation}
where $\nabla f_{\bm{\mu}+\sigma\bm{\epsilon}}(\bm{x}):= \left.\frac{\partial f_{\bm{w}}(\bm{x})}{\partial\bm{w}}\right|_{\bm{w}=\bm{\mu}+\sigma\bm{\epsilon}}\in\mathbb{R}^d$, and $\cdot$ denotes the dot product between vectors. These two results imply that
\begin{align*} 
\nabla_{\bm{\mu}} F(\bm{\mu},\sigma) = \mathbb{E}_{\bm{x}\sim \mathcal{D}}[\nabla_{\bm{\mu}}\bar{F}(\bm{\mu},\sigma;\bm{x})] = \mathbb{E}_{\bm{x}\sim \mathcal{D},\bm{\epsilon}\sim \mathcal{N}(\bm{0},I_d)}[\nabla f_{\bm{\mu}+\sigma\bm{\epsilon}}(\bm{x})] \in\mathbb{R}^d.\\
\nabla_{\sigma} F(\bm{\mu},\sigma) = \mathbb{E}_{\bm{x}\sim \mathcal{D}}[\nabla_{\sigma}\bar{F}(\bm{\mu},\sigma;\bm{x})] = \mathbb{E}_{\bm{x}\sim\mathcal{D}, \bm{\epsilon}\sim \mathcal{N}(\bm{0},I_d)}[\nabla f_{\bm{\mu}+\sigma\bm{\epsilon}}(\bm{x})\cdot\bm{\epsilon}]\in\mathbb{R}. 
\end{align*}

\cite{Iwakiri2022} only states that $\nabla_{\bm{\mu}} F(\bm{\mu},\sigma)$ and $\nabla_{\sigma} F(\bm{\mu},\sigma)$ can be estimated by $\nabla_{\bm{\mu}} \bar{F}(\bm{\mu},\sigma;\bm{x})$ and $\nabla_{\sigma} \bar{F}(\bm{\mu},\sigma;\bm{x})$, using a fixed sample of $\bm{x}$. But they did not specify how the two estimators are computed or estimated given $\bm{x}$. Hence, it seems that they assume the two estimators can be computed exactly. Below, we provide a natural formula for approximating the two estimators. 

\begin{align*}
& \hat{\nabla}_{\bm{\mu}}\bar{F}(\bm{\mu},\sigma;\bm{x}) = \frac{1}{K}\sum_{k=1}^K \nabla f_{\bm{\mu}+\sigma \bm{\epsilon}_k}(\bm{x})\\
& \hat{\nabla}_{\sigma} \bar{F}(\bm{\mu},\sigma;\bm{x}) = \frac{1}{K}\sum_{k=1}^K \nabla f_{\bm{\mu}+\sigma\bm{\epsilon}_k}(\bm{x})\cdot \bm{\epsilon}_k\\
&\hat{\nabla}_{\bm{\mu}} F(\bm{\mu},\sigma) = \frac{1}{KJ}\sum_{j=1}^J \sum_{k=1}^K \nabla f_{\bm{\mu}+\sigma \bm{\epsilon}_k}(\bm{x}_j)\\
&\hat{\nabla}_{\bm{\mu}} F(\bm{\mu},\sigma) = \frac{1}{KJ}\sum_{j=1}^J \sum_{k=1}^K \nabla f_{\bm{\mu}+\sigma \bm{\epsilon}_k}(\bm{x}_j)\cdot\bm{\epsilon}_k
\end{align*}

\subsection{SLGHd}
At every time step $t$, the updating equation is
\begin{align}
\label{SLGH-d} 
\bm{\mu}_{t+1} &= \bm{\mu}_{t} + \beta \hat{\nabla}_{\bm{\mu}} F(\bm{\mu}_t,\sigma_t);\\
\sigma_{t+1} &= \max\{\min\{\sigma_t-\eta  \hat{\nabla}_{\sigma} F(\bm{\mu}_t,\sigma_t),\gamma \sigma_t\}, \epsilon'\}.
\end{align} 

\subsection{SLGHr}
At every time step $t$, the updating equation is
\begin{align}
\label{SLGH-d} 
\bm{\mu}_{t+1} &= \bm{\mu}_{t} + \beta \hat{\nabla}_{\bm{\mu}} F(\bm{\mu}_t,\sigma_t);\\
\sigma_{t+1} &= \gamma \sigma_t.
\end{align} 

\subsection{Proof to Theoretical Results}
\subsubsection{Proof to Lemma \ref{well-defined}}
\noindent\textbf{Lemma \ref{well-defined}} Under Assumption \ref{f-bound}, given any $N>0$ and $\sigma>0$, both $F_{N,\sigma}(\bm{\mu})$ and $\nabla_{\bm{\mu}}F_{N,\sigma}(\bm{\mu})$ are well-defined for all $\bm{\mu}\in\mathbb{R}^d$.
\begin{proof}

\begin{equation*}
\begin{split}
F_{N,\sigma}(\bm{\mu})=^{(\ref{surrogate})} &\mathbb{E}_{\bm{x}\sim \mathcal{D},\bm{w}\sim\mathcal{N}(\bm{\mu},\sigma^2I_d)}[f^{(N)}_{\bm{w}}(\bm{x})]\\
= &\mathbb{E}_{\bm{x}\sim \mathcal{D},\bm{w}\sim\mathcal{N}(\bm{\mu},\sigma^2I_d)}[e^{Nf_{\bm{w}}(\bm{x})}]\\
\leq &\mathbb{E}_{\bm{x}\sim \mathcal{D},\bm{w}\sim\mathcal{N}(\bm{\mu},\sigma^2I_d)}[e^{NM_f}]\\
= & e^{NM_f} <\infty.
\end{split}
\end{equation*}

\begin{equation*}
\begin{split}
\|\nabla F_{N,\sigma}(\bm{\mu}) \|&=^{\text{by (\ref{F-gradient})}} \left\|\mathbb{E}_{\bm{w}\sim \mathcal{N}(\bm{\mu}, \sigma^2I_d),\bm{x}\sim \mathcal{D}}\left[Ne^{Nf_{\bm{w}}(\bm{x})}\frac{\partial f_{\bm{w}}(\bm{x})}{\partial \bm{w}}\right] \right\|\\
&\leq Ne^{NM_f} \mathbb{E}_{\bm{w}\sim \mathcal{N}(\bm{\mu}, \sigma^2I_d),\bm{x}\sim \mathcal{D}}\left[\left\|\frac{\partial f_{\bm{w}}(\bm{x})}{\partial \bm{w}}\right\|\right] \\
&\leq Ne^{NM_f} L_{0f} <\infty,
\end{split}
\end{equation*}
where the last line is implied by the Lipschitz property of $f$ assumed in Assumption \ref{f-bound}. This finishes the proof for Lemma \ref{well-defined}.
\end{proof}

\subsubsection{Proof to Lemma \ref{F-smooth}}
\textbf{Lemma \ref{F-smooth}}
Under Assumption \ref{f-bound} and \ref{Lipschitz-smooth}, for any $\bm{\mu}_1$, $\bm{\mu}_2\in\mathbb{R}^d$, $\| \nabla F_{N,\sigma}(\bm{\mu}_1) -  \nabla F_{N,\sigma}(\bm{\mu}_2) \|\leq L_F\|\bm{\mu}_1-\bm{\mu}_2\|$, where $L_F=e^{NM_f}N(NL_{0f}^2+L_{1f})$.
\begin{proof}
Recall that
$$ \nabla F_{N,\sigma}(\bm{\mu}) =  \mathbb{E}_{\bm{w}\sim \mathcal{N}(\bm{\mu}, \sigma^2I_d),\bm{x}\sim \mathcal{D}}\left[Ne^{Nf_{\bm{w}}(\bm{x})}\frac{\partial f_{\bm{w}}(\bm{x})}{\partial \bm{w}}\right] = \mathbb{E}_{\bm{\epsilon}\sim \mathcal{N}(\bm{0}, I_d),\bm{x}\sim \mathcal{D}}\left[Ne^{Nf_{\bm{\mu}+\sigma \bm{\epsilon}}(\bm{x})}\left.\frac{\partial f_{\bm{w}}(\bm{x})}{\partial \bm{w}}\right|_{\bm{w}=\bm{\mu}+\sigma\bm{\epsilon}} \right]. $$
Then,
\begin{equation}
\begin{split}
\label{nabla-diff}
\nabla F_{N,\sigma}(\bm{\mu}_1) - \nabla F_{N,\sigma}(\bm{\mu}_2)  =& \mathbb{E}_{\bm{\epsilon}\sim \mathcal{N}(\bm{0}, I_d),\bm{x}\sim \mathcal{D}}\left[Ne^{Nf_{\bm{\mu}_1+\sigma\bm{\epsilon}}(\bm{x})} \nabla_{\bm{w}} f_{\bm{\mu}_1+\sigma\bm{\epsilon}}(\bm{x}) - Ne^{Nf_{\bm{\mu}_2+\sigma\bm{\epsilon}}(\bm{x})} \nabla_{\bm{w}} f_{\bm{\mu}_2+\sigma\bm{\epsilon}}(\bm{x})  \right] \\
= & \mathbb{E}_{\bm{\epsilon}\sim \mathcal{N}(\bm{0}, I_d),\bm{x}\sim \mathcal{D}}[(Ne^{Nf_{\bm{\mu}_1+\sigma\bm{\epsilon}}(\bm{x})} -Ne^{Nf_{\bm{\mu}_2+\sigma\bm{\epsilon}}(\bm{x})})\nabla_{\bm{w}} f_{\bm{\mu}_1+\sigma\bm{\epsilon}}(\bm{x})  \\
&\qquad\qquad\qquad\quad+ Ne^{Nf_{\bm{\mu}_2+\sigma\bm{\epsilon}}(\bm{x})} (\nabla_{\bm{w}} f_{\bm{\mu}_1+\sigma\bm{\epsilon}}(\bm{x})-\nabla_{\bm{w}} f_{\bm{\mu}_2+\sigma\bm{\epsilon}}(\bm{x}))] \\
\end{split}
\end{equation}
We bound the two terms inside $\mathbb{E}$ on the RHS separately. For the first term,
\begin{equation*}
\begin{split}
\| Ne^{Nf_{\bm{\mu}_1+\sigma\bm{\epsilon}}(\bm{x})} -Ne^{Nf_{\bm{\mu}_2+\sigma\bm{\epsilon}}(\bm{x})})\nabla_{\bm{w}} f_{\bm{\mu}_1+\sigma\bm{\epsilon}}(\bm{x}) \| & \leq N e^{\xi} N |f_{\bm{\mu}_1+\sigma\bm{\epsilon}}(\bm{x})-f_{\bm{\mu}_2+\sigma\bm{\epsilon}}(\bm{x})| L_{0f},\quad \text{by MVT},\\
&\leq N^2 e^{NM_f}L^2_{0f} \|\bm{\mu}_1 - \bm{\mu}_2\|,\quad \text{by Assumption \ref{f-bound}},
\end{split}
\end{equation*}
where $\xi$ denotes a number that lies between $Nf_{\bm{\mu}_1+\sigma\bm{\epsilon}}(\bm{x})$ and $Nf_{\bm{\mu}_2+\sigma\bm{\epsilon}}(\bm{x})$. For the second term,
$$\|Ne^{Nf_{\bm{\mu}_2+\sigma\bm{\epsilon}}(\bm{x})} (\nabla_{\bm{w}} f_{\bm{\mu}_1+\sigma\bm{\epsilon}}(\bm{x})-\nabla_{\bm{w}} f_{\bm{\mu}_2+\sigma\bm{\epsilon}}(\bm{x}))] \|\leq Ne^{NM_f}L_{1f}\|\bm{\mu}_1 - \bm{\mu}_2 \|,\quad\text{by Assumption \ref{f-bound} \& \ref{Lipschitz-smooth}}. $$
Plugging the two results to (\ref{nabla-diff}) gives
$$ \|\nabla F_{N,\sigma}(\bm{\mu}_1) - \nabla F_{N,\sigma}(\bm{\mu}_2)\|\leq e^{NM_f}N(NL_{0f}^2+L_{1f})\|\bm{\mu}_1-\bm{\mu}_2\|. $$
\end{proof}

\subsubsection{Proof to Lemma \ref{nablaF-bound}}
\textbf{Lemma \ref{nablaF-bound}} Under Assumption \ref{f-bound}, $\| \hat{\nabla} F_{N,\sigma}(\bm{\mu}) \|^2 \leq N^2e^{2NM_f} L_{0f}^2$.
\begin{proof}
\begin{equation}
\begin{split}
\| \hat{\nabla} F_{N,\sigma}(\bm{\mu}) \|^2 = & \frac{1}{K^2J^2} \sum_{k=1}^K\sum_{j=1}^J\sum_{l=1}^K\sum_{m=1}^J N^2e^{N(f_{\bm{w}_k}(\bm{x}_j)+f_{\bm{w}_l}(\bm{x}_m))} (\nabla_{\bm{w}} f_{\bm{w}_k}(\bm{x}_j))'\nabla_{\bm{w}}f_{\bm{w}_l}(\bm{x}_m)\\
\leq & \frac{N^2e^{2NM_f}}{K^2J^2} \sum_{k=1}^K\sum_{j=1}^J\sum_{l=1}^K\sum_{m=1}^J \|\nabla_{\bm{w}} f_{\bm{w}_k}(\bm{x}_j)\|\cdot\|\nabla_{\bm{w}}f_{\bm{w}_l}(\bm{x}_m)\|\\
\leq & N^2e^{2NM_f} L_{0f}^2,
\end{split}
\end{equation}
where the superscript $'$ denotes vector transpose.
\end{proof}

\subsubsection{Proof to Theorem \ref{main-theorem}}
\textbf{Theorem }\ref{main-theorem}. Let $\{\bm{\mu}_t\}_{t=1}^T$ be the series produced by (\ref{update-rule}) with a pre-selected deterministic $\bm{\mu}_0$. Under Assumption \ref{f-bound} and \ref{Lipschitz-smooth}, we have 
$$\sum_{t=1}^{T}\alpha_t \mathbb{E}[\|\nabla F_{N_{t},\sigma_{t}}(\bm{\mu}_{t-1})\|^2] \leq e^{(N_0+\Delta)M_f} -F_{N_{0},b+\sigma_{0}}(\bm{\mu}_{0}) + H_1 \sum_{t=0}^{T-1}\alpha_t^2 +H_2d \sum_{t=0}^{T-1}\beta^t+H_3 \sum_{t=1}^{T} \phi_{t},$$
where $H_1=(N_0+\Delta)^3 e^{3(N_0+\Delta)M_f}((N_0+\Delta)L^2_{0f}+L_{1f})L_{0f}^2$, $H_2 = e^{(N_0+\Delta)M_f}(N_0+\Delta)L_{0f}\sigma_0(1-\beta)\sqrt{2}$, $H_3:=\max\{ (N_0e)^{-1},  e^{(N_0+\Delta)M_f} M_f \}\Delta$.
\begin{proof}
\begin{equation}
\begin{split}
\label{EF-increment}
\mathbb{E}[F_{N_{t+1},\sigma_{t+1}}(\bm{\mu}_{t+1})]-\mathbb{E}[F_{N_{t},\sigma_{t}}(\bm{\mu}_{t})] = &\mathbb{E}[F_{N_{t+1},\sigma_{t+1}}(\bm{\mu}_{t+1})-F_{N_{t+1},\sigma_{t+1}}(\bm{\mu}_{t})] \\
&+ \mathbb{E}[F_{N_{t+1},\sigma_{t+1}}(\bm{\mu}_{t})-F_{N_{t+1},\sigma_{t}}(\bm{\mu}_{t})] + \mathbb{E}[F_{N_{t+1},\sigma_{t}}(\bm{\mu}_{t})-F_{N_{t},\sigma_{t}}(\bm{\mu}_{t})].
\end{split}
\end{equation}
We provide lower bounds for each of the three terms on the right-hand side (RHS) separately.\\

\textbf{Bound the first term.}
\begin{equation*}
\begin{split}
F_{N_{t+1},\sigma_{t+1}}(\bm{\mu}_{t+1})=&F_{N_{t+1},\sigma_{t+1}}(\bm{\mu}_t) + (\nabla F_{N_{t+1},\sigma_{t+1}}(\bm{\nu}_t))' (\bm{\mu}_{t+1}-\bm{\mu}_t), \quad \text{by MVT},\\
=&F_{N_{t+1},\sigma_{t+1}}(\bm{\mu}_t) + (\nabla F_{N_{t+1},\sigma_{t+1}}(\bm{\mu}_t))' (\bm{\mu}_{t+1}-\bm{\mu}_t) \\
&+ (\nabla F_{N_{t+1},\sigma_{t+1}}(\bm{\nu}_{t})-\nabla F_{N_{t+1},\sigma_{t+1}}(\bm{\mu}_t))'(\bm{\mu}_{t+1}-\bm{\mu}_t)\\
=&F_{N_{t+1},\sigma_{t+1}}(\bm{\mu}_t) + \alpha_{t+1} (\nabla F_{N_{t+1},\sigma_{t+1}}(\bm{\mu}_t))' \hat{\nabla} F_{N_{t+1},\sigma_{t+1}}(\bm{\mu}_t),\quad \text{by (\ref{update-rule})},\\
&+ (\nabla F_{N_{t+1},\sigma_{t+1}}(\bm{\nu}_t)-\nabla F_{N_{t+1},\sigma_{t+1}}(\bm{\mu}_t))'(\bm{\mu}_{t+1}-\bm{\mu}_t)\\
\geq &F_{N_{t+1},\sigma_{t+1}}(\bm{\mu}_t) + \alpha_{t+1} (\nabla F_{N_{t+1},\sigma_{t+1}}(\bm{\mu}_t))' \hat{\nabla} F_{N_{t+1},\sigma_{t+1}}(\bm{\mu}_t)\\
&-L_F\|\bm{v}_t-\bm{\mu}_t\|\cdot\|\bm{\mu}_{t+1}-\bm{\mu}_t \|,\quad \text{by Lemma \ref{F-smooth} and Cauchy Schwarz Ineq.},\\
\geq &F_{N_{t+1},\sigma_{t+1}}(\bm{\mu}_t) + \alpha_{t+1} (\nabla F_{N_{t+1},\sigma_{t+1}}(\bm{\mu}_t))' \hat{\nabla} F_{N_{t+1},\sigma_{t+1}}(\bm{\mu}_t) -L_F\|\bm{\mu}_{t+1}-\bm{\mu}_t \|^2,\\
= &F_{N_{t+1},\sigma_{t+1}}(\bm{\mu}_t) + \alpha_{t+1} (\nabla F_{N_{t+1},\sigma_{t+1}}(\bm{\mu}_t))' \hat{\nabla} F_{N_{t+1},\sigma_{t+1}}(\bm{\mu}_t)-\alpha_{t+1}^2L_F\|\hat{\nabla} F_{N_{t+1},\sigma_{t+1}}(\bm{\mu}_t)\|^2,
\end{split}
\end{equation*}
where $\bm{\nu}_t=\lambda_t\bm{\mu}_{t}+(1-\lambda_t)\bm{\mu}_{t+1}$ for some $\lambda_t\in [0,1]$ and $L_F = e^{N_{t+1}M_f}N_{t+1}(N_{t+1}L_{0f}^2+L_{1f})$. Taking the expectation on both the left-end and right-end gives:
\begin{equation*}
\begin{split}
\mathbb{E}[F_{N_{t+1},\sigma_{t+1}}(\bm{\mu}_{t+1})] &= \mathbb{E}[F_{N_{t+1},\sigma_{t+1}}(\bm{\mu}_{t})] +\alpha_{t+1} \mathbb{E}[\|\nabla F_{N_{t+1},\sigma_{t+1}}(\bm{\mu}_t)\|^2] -\alpha_{t+1}^2 L_F \mathbb{E}[\|\hat{\nabla} F_{N_{t+1},\sigma_{t+1}}(\bm{\mu}_t)\|^2]\\
&\geq  \mathbb{E}[F_{N_{t+1},\sigma_{t+1}}(\bm{\mu}_{t})] +\alpha_{t+1} \mathbb{E}[\|\nabla F_{N_{t+1},\sigma_{t+1}}(\bm{\mu}_t)\|^2] -\alpha_{t+1}^2 H_1, \quad\text{from Lemma \ref{nablaF-bound}},
\end{split}
\end{equation*}
where $H_1=(N_0+\Delta)^3 e^{3(N_0+\Delta)M_f}((N_0+\Delta)L^2_{0f}+L_{1f})L_{0f}^2$.
It further implies
\begin{equation}
\label{first-term}
\mathbb{E}[F_{N_{t+1},\sigma_{t+1}}(\bm{\mu}_{t+1})-F_{N_{t+1},\sigma_{t+1}}(\bm{\mu}_{t})] \geq \alpha_{t+1} \mathbb{E}[\|\nabla F_{N_{t+1},\sigma_{t+1}}(\bm{\mu}_t)\|^2] -\alpha_{t+1}^2 H_1.
\end{equation}

\textbf{Bound the second term.}
\begin{equation*}
\begin{split}
|F_{N_{t+1},\sigma_{t+1}}(\bm{\mu}_t) - F_{N_{t+1},\sigma_{t}}(\bm{\mu}_t)| 
&=\left|\mathbb{E}_{\bm{\epsilon}\sim\mathcal{N}(\bm{0},I_d),\bm{x}\sim\mathcal{D}} \left[e^{N_{t+1}f_{\bm{\mu}_t+\sigma_{t+1}\bm{\epsilon}}(\bm{x})}-e^{N_{t+1}f_{\bm{\mu}_t+\sigma_{t}\bm{\epsilon}}(\bm{x})}\right]\right| \\
&\leq \mathbb{E}_{\bm{\epsilon}\sim\mathcal{N}(\bm{0},I_d)}  \left| e^{\xi}N_{t+1} (f_{\bm{\mu}_t+\sigma_{t+1}\bm{\epsilon}}(\bm{x})-f_{\bm{\mu}_t+\sigma_{t}\bm{\epsilon}}(\bm{x}) ) \right|\\
&\leq  e^{(N_0+\Delta)M_f}(N_0+\Delta)\mathbb{E}_{\bm{\epsilon}\sim\mathcal{N}(\bm{0},I_d)}  [L_{0f}\|(\sigma_{t+1}-\sigma_t)\bm{\epsilon}\|]\\
&=  e^{(N_0+\Delta)M_f}(N_0+\Delta)L_{0f}\sigma_0(1-\beta)\beta^t\mathbb{E}_{\bm{\epsilon}\sim\mathcal{N}(\bm{0},I_d)}  \|\bm{\epsilon}\|\\
 &\leq e^{(N_0+\Delta)M_f}(N_0+\Delta)L_{0f}\sigma_0(1-\beta)\beta^t \frac{\sqrt{2}\Gamma ((d+1)/2) }{\Gamma(d/2)} \\
&\leq e^{(N_0+\Delta)M_f}(N_0+\Delta)L_{0f}\sigma_0(1-\beta)\beta^t \sqrt{2}d \\
&= H_2d \beta^t, 
\end{split}
\end{equation*}
where $H_2 = e^{(N_0+\Delta)M_f}(N_0+\Delta)L_{0f}\sigma_0(1-\beta)\sqrt{2}$ and $\Gamma$ denotes the Gamma function. This result implies
\begin{equation}
\label{second-term}
\mathbb{E}[F_{N_{t+1},\sigma_{t+1}}(\bm{\mu}_t) - F_{N_{t+1},\sigma_{t}}(\bm{\mu}_t)] \geq  - H_2d \beta^t.
\end{equation}

\textbf{Bound the third term.}
\begin{equation}
\label{F-in-third}
| F_{N_{t+1},\sigma_{t}}(\bm{\mu}_{t})-F_{N_{t},\sigma_{t}}(\bm{\mu}_{t}) | = |\mathbb{E}_{\bm{\varepsilon}\sim\mathcal{N}(\bm{0},I_d),\bm{x}\sim \mathcal{D}}[e^{N_{t+1}f_{\bm{\mu}_t+\sigma_t \bm{\varepsilon}}(\bm{x})}-e^{N_{t}f_{\bm{\mu}_t+\sigma_t \bm{\varepsilon}}(\bm{x})}]| \leq 
H_3 \phi_{t+1},
\end{equation}
where $H_3:=\max\{ (N_0e)^{-1},  e^{(N_0+\Delta)M_f} M_f \}\Delta$. Below, we provide the proof. For convenience, denote $f_{\bm{\mu}_t+\sigma_t \bm{\varepsilon}}(\bm{x})$ by $x$. On one hand, if $x<0$, then 
\begin{equation}
\begin{split}
|e^{N_{t+1}x}-e^{N_tx}| &= e^{N_tx} - e^{N_{t+1}x} \\
&= e^{\xi_t x} (-x) \phi_{t+1}\Delta,\quad\text{by MVT},\\
&\leq  e^{N_0 x} (-x) \phi_{t+1}\Delta\\
&\leq  (N_0e)^{-1} \phi_{t+1}\Delta,
\end{split}
\end{equation}
where $\xi_t\in [N_t,N_{t+1}]$ and the last line is because of $\max_{x\in\mathbb{R}}h(x)=(N_0e)^{-1}$ (by the first-order condition), where $h(x):=e^{N_0 x} (-x)$.
 
On the other hand, if $x\geq 0$ (implying $M_f>0$), 
$$ |e^{N_{t+1}x}-e^{N_tx}| = e^{\xi_tx} x\phi_{t+1}\Delta\leq e^{(N_0+\Delta)M_f} M_f \phi_{t+1}\Delta,$$
where the second inequality is because $x\leq M_f$ by Assumption \ref{f-bound}. 
In sum, for any $x\leq M_f$, we have that 
$$ |e^{N_{t+1}x}-e^{N_tx}| \leq \max\{ (N_0e)^{-1},  e^{(N_0+\Delta)M_f} M_f \}\phi_{t+1}\Delta.$$
This implies (\ref{F-in-third}), which in turn indicates
\begin{equation}
\label{third-term}
\mathbb{E}[F_{N_{t+1},\sigma_{t}}(\bm{\mu}_{t})- F_{N_{t},\sigma_{t}}(\bm{\mu}_{t})] \geq - H_3 \phi_{t+1}.
\end{equation}

\textbf{Wrap Up.}

Plugging (\ref{first-term}), (\ref{second-term}), and (\ref{third-term}) to (\ref{EF-increment}) gives
\begin{equation}
\begin{split}
\mathbb{E}[F_{N_{t+1},\sigma_{t+1}}(\bm{\mu}_{t+1})]-\mathbb{E}[F_{N_{t},\sigma_{t}}(\bm{\mu}_{t})] \geq  &\alpha_{t+1} \mathbb{E}[\|\nabla F_{N_{t+1},\sigma_{t+1}}(\bm{\mu}_t)\|^2] -\alpha_{t+1}^2 H_1-H_2d\beta^t- H_3\phi_{t+1}.
\end{split}
\end{equation}
Take the sum of both sides over $t\in\{0,1,...,T-1\}$ gives
\begin{equation}
\begin{split}
\mathbb{E}[F_{N_{T},\sigma_{T}}(\bm{\mu}_{T})] - \mathbb{E}[F_{N_{0},b+\sigma_{0}}(\bm{\mu}_{0})] \geq &\sum_{t=1}^{T}\alpha_{t} \mathbb{E}[\|\nabla F_{N_{t},\sigma_{t}}(\bm{\mu}_{t-1})\|^2] -H_1 \sum_{t=1}^{T}\alpha_t^2 -H_2d \sum_{t=0}^{T-1}\beta^t-H_3 \sum_{t=1}^{T} \phi_{t}.
\end{split}
\end{equation}
Reorganizing the terms gives
\begin{equation*}
\begin{split}
 \sum_{t=1}^{T}\alpha_t \mathbb{E}[\|\nabla F_{N_{t},\sigma_{t}}(\bm{\mu}_{t-1})\|^2] &\leq \mathbb{E}[F_{N_{T},\sigma_{T}}(\bm{\mu}_{T})] - \mathbb{E}[F_{N_{0},b+\sigma_{0}}(\bm{\mu}_{0})] + H_1 \sum_{t=0}^{T-1}\alpha_t^2 +H_2d \sum_{t=0}^{T-1}\beta^t+H_3 \sum_{t=1}^{T} \phi_{t}\\
&\leq  e^{(N_0+\Delta)M_f} - \mathbb{E}[F_{N_{0},b+\sigma_{0}}(\bm{\mu}_{0})] + H_1 \sum_{t=0}^{T-1}\alpha_t^2 +H_2d \sum_{t=0}^{T-1}\beta^t+H_3 \sum_{t=1}^{T} \phi_{t}.\\
&\leq  e^{(N_0+\Delta)M_f} - F_{N_{0},b+\sigma_{0}}(\bm{\mu}_{0}) + H_1 \sum_{t=0}^{T-1}\alpha_t^2 +H_2d \sum_{t=0}^{T-1}\beta^t+H_3 \sum_{t=1}^{T} \phi_{t}.
\end{split}
\end{equation*}
\end{proof}

\subsection{Proof to Corollary \ref{conv-rate}}
\textbf{Corollary \ref{conv-rate}}. Let $\{\bm{\mu}_t\}_{t=1}^T$ be the series produced by (\ref{update-rule}) with a pre-selected deterministic $\bm{\mu}_0$ and a learning rate $\alpha_t=t^{-(1/2+\gamma)}$ where $\gamma\in (0,1/2)$ and $T\geq 2$. Under Assumption \ref{lr-assumption}, \ref{f-bound}, and \ref{Lipschitz-smooth}, for any $\varepsilon>0$, whenever $T> (\frac{C_{0,\Delta}(1-2\gamma)}{2-2^{1/2+\gamma}}d\varepsilon^{-1})^{2/(1-2\gamma)}=O_{\Delta}((d \varepsilon^{-1})^{2/(1-2\gamma)})$, we have that
$$ \min_{t\in\{\lfloor T/2\rfloor,...,T\}} \mathbb{E}[\|\nabla F_{N_{t},\sigma_{t}}(\bm{\mu}_{t-1})\|^2] \leq \varepsilon. $$ Here, $C_{0,\Delta}:=e^{(N_0+\Delta)M_f} - F_{N_{0},b+\sigma_{0}}(\bm{\mu}_{0}) + H_1 \sum_{t=1}^{\infty}t^{-(1/2+\gamma)} +H_2(1-\beta)^{-1}+H_3$, $C_{1,\Delta}= \max\{1, 2/C_{0,\Delta}\}$, $\lfloor T/2 \rfloor$ denotes the largest integer no greater than $T/2$, and $\{H_1,H_2,H_3\}$ are as those defined in Theorem \ref{main-theorem}.

\begin{proof}
For any positive integer $t$, define
\begin{equation} 
\label{nu-def}
\nu_t:= \min_{\tau\in\{ \lfloor t/2\rfloor,...,t\}}\mathbb{E}[\|\nabla F_{N_{\tau},\sigma_{\tau}}(\bm{\mu}_{\tau-1})\|^2].
\end{equation}
Then, 
\begin{equation*}
\begin{split}
\sum_{t=\lfloor T/2\rfloor}^{T} \alpha_t \nu_{T} &\leq^{by (\ref{nu-def})} \sum_{t=\lfloor T/2\rfloor}^{T} \alpha_t  \mathbb{E}[\|\nabla F_{N_t,\sigma_{t}}(\bm{\mu}_{t-1})\|^2] \\
&\leq\sum_{t=1}^{T} \alpha_t  \mathbb{E}[\|\nabla F_{N_t,\sigma_{t}}(\bm{\mu}_{t-1})\|^2] \\
& \leq^{\text{Theorem \ref{main-theorem}}} e^{(N_0+\Delta)M_f} - F_{N_{0},b+\sigma_{0}}(\bm{\mu}_{0}) + H_1 \sum_{t=0}^{T-1}\alpha_t^2 +H_2d \sum_{t=0}^{T-1}\beta^t+H_3 \sum_{t=1}^{T} \phi_{t}\\
& \leq^{\text{Theorem \ref{main-theorem}}} e^{(N_0+\Delta)M_f} - F_{N_{0},b+\sigma_{0}}(\bm{\mu}_{0}) + H_1 \sum_{t=1}^{\infty}t^{-(1/2+\gamma)} +H_2d (1-\beta)^{-1}+H_3\\
& = C_{0,\Delta} d,
\end{split}
\end{equation*}
where the fourth line is because $e^{(N_0+\Delta)M_f} - F_{N_{0},b+\sigma_{0}}(\bm{\mu}_{0})>0$ and $d\geq 1$. Here, 
$$C_{0,\Delta}:=e^{(N_0+\Delta)M_f} - F_{N_{0},b+\sigma_{0}}(\bm{\mu}_{0}) + H_1 \sum_{t=1}^{\infty}t^{-(1/2+\gamma)} +H_2(1-\beta)^{-1}+H_3 <\infty.$$
Hence, 
$$\sum_{t=\lfloor T/2\rfloor}^{T} \alpha_t \nu_{T} \leq  C_{0,\Delta} d. $$
It further implies
\begin{equation*}
\begin{split}
\nu_T&\leq \frac{C_{0,\Delta} d}{\sum_{t=\lfloor T/2\rfloor}^{T} \alpha_t}\\
&\leq\frac{C_{0,\Delta}d}{\sum_{t=\lfloor T/2\rfloor}^{T} t^{-(1/2+\gamma)}}\\
&<\frac{C_{0,\Delta}d}{\int_{\lfloor T/2\rfloor}^Tt^{-(1/2+\gamma)}dt}\\
&=\frac{C_{0,\Delta}d}{(T^{\frac{1}{2}-\gamma}- \lfloor T/2\rfloor^{\frac{1}{2}-\gamma})/(\frac{1}{2}-\gamma)}\\
&\leq \frac{C_{0,\Delta}d}{(T^{\frac{1}{2}-\gamma}- (T/2)^{\frac{1}{2}-\gamma})/(\frac{1}{2}-\gamma)}\\
&= \frac{C_{0,\Delta}d}{ T^{1/2-\gamma}} \frac{1/2-\gamma}{1-(1/2)^{1/2-\gamma}}\\
&= \frac{C_{0,\Delta}d}{ T^{1/2-\gamma}} \frac{1-2\gamma}{2-2^{1/2+\gamma}}
\end{split}
\end{equation*}
Hence, we have
\begin{equation}
\label{nu-bound}
\nu_T \leq \frac{C_{0,\Delta}(1-2\gamma)}{2-2^{1/2+\gamma}}\frac{d}{T^{\frac{1}{2}-\gamma}}.
\end{equation}
Define $C_{1,\Delta}:=\max\{1, 2/C_{0,\Delta}\}$. Given any $\varepsilon\in(0,1)$, whenever $T> (\frac{C_{0,\Delta}(1-2\gamma)}{2-2^{1/2+\gamma}}d\varepsilon^{-1})^{2/(1-2\gamma)}=O_{\Delta}((d \varepsilon^{-1})^{2/(1-2\gamma)})$, we have
\begin{align*} \nu_T &\leq^{\text{from }(\ref{nu-bound})}  \frac{C_{0,\Delta}(1-2\gamma)}{2-2^{1/2+\gamma}}\frac{d}{T^{\frac{1}{2}-\gamma}}\\
&<  \frac{C_{0,\Delta}(1-2\gamma)}{2-2^{1/2+\gamma}} \frac{d}{(\frac{C_{0,\Delta}(1-2\gamma)}{2-2^{1/2+\gamma}}d\varepsilon^{-1})^{\frac{2}{1-2\gamma}(\frac{1}{2}-\gamma)}} \\
&=\varepsilon.
\end{align*}
This finishes the proof for Corollary \ref{conv-rate}.
\end{proof}

\subsection{Proof for Corollary \ref{Delta-irrelevant}}
\textbf{Corollary \ref{Delta-irrelevant}} The dependence of the $O$-term on $\Delta$ can be removed if we further assume that $M_f\leq 0$.

\begin{proof}
The dependence of the $O$-term in Corollary \ref{conv-rate} comes from the upper bound of $\sum_{t=1}^{T}\alpha_t \mathbb{E}[\|\nabla F_{N_{t},\sigma_{t}}(\bm{\mu}_{t-1})\|^2]$ in Theorem \ref{main-theorem}. 
Under the additional assumption that $M_f\leq 0$, we derive a new upper bound that is independent from $\Delta$. More specifically, we show that $H_1$, $H_2$, and $H_3$ in the upper bound can be replaced with terms that are irrelevant to $\Delta$. With this new bound, the $\Delta$-dependence of the $O$-term in Corollary \ref{conv-rate} can be removed.

Recall that $H_1=(N_0+\Delta)^3 e^{3(N_0+\Delta)M_f}((N_0+\Delta)L^2_{0f}+L_{1f})L_{0f}^2$ and $H_2 = e^{(N_0+\Delta)M_f}(N_0+\Delta)L_{0f}\sigma_0(1-\beta)\sqrt{2}$, both of which can be viewed as functions of $\Delta$. It is straightforward to see that
$  \lim_{\Delta \to +\infty} H_1(\Delta) = \lim_{\Delta \to +\infty} H_2(\Delta) = 0  $ whenever $M_f < 0$. Since both $  H_1(\Delta)  $ and $  H_2(\Delta)  $ are continuous and non-negative for $  \Delta \ge 0  $ (or $  \Delta > -N_0  $, depending on the domain), there exist finite constants $h_1, h_2 \geq0$, independent of $  \Delta$, such that $0 \leq H_1(\Delta) \leq h_1$ and $0 \le H_2(\Delta) \leq h_2$ for all $\Delta>0$.

Next, we take care of $H_3$. We replace the part of ``Bound the Third Term" in the proof for Theorem \ref{main-theorem} with the following, assuming $M_f\leq 0$. First, we show that
\begin{equation}
\label{H3-rederive}
| F_{N_{t+1},\sigma_{t}}(\bm{\mu}_{t})-F_{N_{t},\sigma_{t}}(\bm{\mu}_{t}) | = |\mathbb{E}_{\bm{\varepsilon}\sim\mathcal{N}(\bm{0},I_d),\bm{x}\sim \mathcal{D}}[e^{N_{t+1}f_{\bm{\mu}_t+\sigma_t \bm{\varepsilon}}(\bm{x})}-e^{N_{t}f_{\bm{\mu}_t+\sigma_t \bm{\varepsilon}}(\bm{x})}]| \leq 
((\sum_{\tau=1}^t\phi_\tau)e)^{-1} \phi_{t+1},
\end{equation}
For convenience, denote $f_{\bm{\mu}_t+\sigma_t \bm{\varepsilon}}(\bm{x})$ by $x$. Since $x\leq0$, then for any $t\geq 1$,
\begin{align*}
|e^{N_{t+1}x}-e^{N_tx}| &= e^{N_tx} - e^{N_{t+1}x} \\
&= e^{\xi_t x} (-x) \phi_{t+1}\Delta,\quad\text{by MVT},\\
&\leq  e^{N_t x} (-x) \phi_{t+1}\Delta\\
&\leq  (N_t e)^{-1} \phi_{t+1}\Delta\\
&= ((N_0+\sum_{\tau=1}^t\phi_\tau \Delta)e)^{-1} \phi_{t+1}\Delta,\quad\text{from \ref{update-rule}},\\
&\leq  \frac{\phi_{t+1}}{\sum_{\tau=1}^t \phi_\tau e}\\
&\leq  \phi_1^{-1} \phi_{t+1}
\end{align*}
where $\xi_t\in [N_t,N_{t+1}]$ and the last line is because of $\max_{x\in\mathbb{R}}h(x)=(N_te)^{-1}$ (by the first-order condition), where $h(x):=e^{N_t x} (-x)$. This proves (\ref{H3-rederive}), which in turn indicates
\begin{equation}
\mathbb{E}[F_{N_{t+1},\sigma_{t}}(\bm{\mu}_{t})- F_{N_{t},\sigma_{t}}(\bm{\mu}_{t})] \geq - \phi_1^{-1} \phi_{t+1}.
\end{equation}
Combining this result with the rest of the proof to Theorem \ref{main-theorem} gives
\begin{align*}
\sum_{t=2}^{T}\alpha_t \mathbb{E}[\|\nabla F_{N_{t},\sigma_{t}}(\bm{\mu}_{t-1})\|^2] &\leq e^{(N_0+\Delta)M_f} - \mathbb{E}[F_{N_{1},b+\sigma_{1}}(\bm{\mu}_{1})] + H_1 \sum_{t=1}^{T-1}\alpha_t^2 +H_2d \sum_{t=1}^{T-1}\beta^t+\phi_1^{-1}\sum_{t=2}^{T}\phi_{t}\\
&\leq 1 + h_1\sum_{t=0}^{T-1}\alpha_t^2 +h_2 d \sum_{t=0}^{T-1}\beta^t+\phi_1^{-1}\sum_{t=2}^{T}\phi_{t}.
\end{align*}
This new upper bound is independent from $\Delta$. Using this result to modify the proof to Corollary \ref{conv-rate} will remove the dependence of the $O$-term on $\Delta$.
\end{proof}

\subsection{Proof to Lemma \ref{sn-lemma}}
\textbf{Lemma \ref{sn-lemma}} Under Assumption \ref{f-bound} and \ref{ess}, for any $\delta>0$, we have
(1) $\lim_{N\rightarrow +\infty}s_N/v_N=0$, where $s_N:=\sup_{\|\bm{w}-\bm{w}^*\|\geq\delta}G_N(\bm{w})$, $v_N:=G_N(\bm{w}^*)e^{-N\eta/4}$, and $\eta:= \Psi(\bm{w}^*)-\sup_{\|\bm{w}-\bm{w}^*\|\geq \delta} \Psi(\bm{w})$, and (2) there exists $\delta'\in(0,\delta)$ and $N_{\delta'}$ such that $G_N(\bm{w})>v_N$ whenever $\|\bm{w}-\bm{w}^*\|<\delta'$ and $N>\frac{8}{\eta}\log c_{\epsilon}^{-1}$. Here, $\epsilon=\eta/16$ and $c_{\epsilon}$ is specified in Assumption \ref{ess}.
\begin{proof}
We first prove the following preliminary result: for any $\bm{w}$,
\begin{equation}
\label{GN-limit}
\lim_{N\rightarrow +\infty} \frac{1}{N}\log G_N(\bm{w}) = \Psi(\bm{w}).
\end{equation}
One one hand, for any $\varepsilon>0$, we have 
\begin{equation*}
\begin{split}
& G_N(\bm{w}) = \mathbb{E}_{\bm{x}\sim \mathcal{D}}[e^{Nf_{\bm{w}}(\bm{x})}] \geq e^{N(\Psi(\bm{w})-\varepsilon)}\mathbb{P}(f_{\bm{w}}(\bm{x})\geq \Psi(\bm{w})-\varepsilon) \\
\Rightarrow\quad & \frac{1}{N}\log G_N(\bm{w}) \geq \Psi(\bm{w})-\varepsilon + \frac{1}{N}\log \mathbb{P}(f_{\bm{w}}(\bm{x})\geq \Psi(\bm{w})-\varepsilon)\\
\Rightarrow\quad &\liminf_{N\rightarrow \infty} \frac{1}{N}\log G_N(\bm{w}) \geq \Psi(\bm{w})-\varepsilon
\end{split}
\end{equation*}
where the $\log\mathbb{P}$ term is well-defined since $\mathbb{P}(f_{\bm{w}}(\bm{x})>\Psi(\bm{w})-\varepsilon)>0$\footnote{If it does not hold true, then there exists some $\varepsilon_0>0$ such that $\mathbb{P}(f_{\bm{w}}(\bm{x})>\Psi(\bm{w})-\varepsilon_0)=0$. Hence, $\mathbb{P}(f_{\bm{w}}(\bm{x})\leq \Psi(\bm{w})-\varepsilon_0)=1$, which further implies $\inf\{a\in\mathbb{R} | \mathbb{P}(f_{\bm{w}}(\bm{x})\leq a)=1\}\leq\Psi(\bm{w})-\varepsilon_0$. That is, $\Psi(\bm{w})\leq \Psi(\bm{w})-\varepsilon_0$, which is a contradiction.}. 
On the other hand, since $\mathbb{P}(f_{\bm{w}}(\bm{x})\leq \Psi(\bm{w}))=1$,
\begin{equation*}
\begin{split}
 & \frac{1}{N}\log G_N(\bm{w}) \leq \frac{1}{N}\log \mathbb{E}_{\bm{x}\sim \mathcal{D}}[e^{N\Psi({\bm{w}})}] = \Psi({\bm{w}}) \\
\Rightarrow\quad & \limsup_{N\rightarrow \infty}  \frac{1}{N}\log G_N(\bm{w})  \leq \Psi(\bm{w}).
\end{split}
\end{equation*}
In sum, for any $\varepsilon>0$, we have
$$ \Psi(\bm{w})-\varepsilon\leq \liminf_{N\rightarrow \infty} \frac{1}{N}\log G_N(\bm{w}) \leq  \limsup_{N\rightarrow \infty}  \frac{1}{N}\log G_N(\bm{w})  \leq \Psi(\bm{w}). $$
Since $\varepsilon$ can be arbitrarily close to zero, we have that
$$ \liminf_{N\rightarrow \infty} \frac{1}{N}\log G_N(\bm{w})   =\limsup_{N\rightarrow \infty}  \frac{1}{N}\log G_N(\bm{w}) = \Psi(\bm{w}).$$
This further implies (\ref{GN-limit}).

Next, we prove the following limit:
\begin{equation}
\label{sN-limit}
\lim_{N\rightarrow \infty}\frac{1}{N}\log s_N = \sup_{\|\bm{w}-\bm{w}^*\|\geq\delta} \Psi(\bm{x}).
\end{equation} 
We denote $\mathcal{C}:=\{\bm{w}\in\mathbb{R}^d | \|\bm{w}-\bm{w}^*\|\geq\delta\}$. Then, it is a closed set in $\mathbb{R}^d$. On one hand,
\begin{equation*}
\begin{split}
&G_N(\bm{w}) = \mathbb{E}_{\bm{x}\sim \mathcal{D}}[e^{Nf_{\bm{w}}(\bm{x})}] \leq e^{N\Psi(\bm{w})} \leq e^{N\sup_{\bm{w}\in\mathcal{C}}\Psi(\bm{w})},\quad \text{for any } \bm{w}\in\mathcal{C},\\
\Rightarrow \quad &\sup_{\bm{w}\in\mathcal{C}} G_N(\bm{w}) \leq e^{N\sup_{\bm{w}\in\mathcal{C}}\Psi(\bm{w})} \quad  \Rightarrow\quad s_N \leq e^{N\sup_{\bm{w}\in\mathcal{C}}\Psi(\bm{w})} \\
\Rightarrow \quad &\frac{1}{N}\log s_N \leq \sup_{\bm{w}\in\mathcal{C}}\Psi(\bm{w})\\
\Rightarrow\quad &\limsup_{N\rightarrow \infty} \frac{1}{N}\log s_N \leq \sup_{\bm{w}\in\mathcal{C}}\Psi(\bm{w}).\\
\end{split}
\end{equation*}
On the other hand, for any $\varepsilon>0$, let $\bm{w}_\epsilon\in\mathcal{C}$ be such that $\Psi(\bm{w}_c)>\sup_{\bm{w}\in\mathcal{C}}\Psi(\bm{w})-\varepsilon$. Then,
\begin{equation*}
\begin{split}
&\frac{1}{N}\log s_N \geq \frac{1}{N}\log G_N(\bm{w}_c) \\
\Rightarrow \quad & \liminf_{N\rightarrow \infty} \frac{1}{N}\log s_N \geq  \liminf_{N\rightarrow \infty} \frac{1}{N}\log G_N(\bm{w}_c) = ^{\text{by (\ref{GN-limit})}} \Psi(\bm{w}_c)> \sup_{\bm{w}\in\mathcal{C}}\Psi(\bm{w})-\varepsilon.
\end{split}
\end{equation*}
In sum, we have
$$ \Psi(\bm{w})-\varepsilon< \liminf_{N\rightarrow \infty} \frac{1}{N}\log s_N\leq \limsup_{N\rightarrow \infty} \frac{1}{N}\log s_N \leq \sup_{\bm{w}\in\mathcal{C}}\Psi(\bm{w}),$$
which implies 
$$ \liminf_{N\rightarrow \infty} \frac{1}{N}\log s_N = \limsup_{N\rightarrow \infty} \frac{1}{N}\log s_N = \sup_{\bm{w}\in\mathcal{C}}\Psi(\bm{w}) $$
and hence (\ref{sN-limit}).

Now, we prove (1) in the lemma.
$$\lim_{N\rightarrow\infty} \frac{1}{N} (\log S_N -\log G_N(\bm{w}^*)) =^{\text{by (\ref{GN-limit})}, (\ref{sN-limit})} \sup_{\bm{w}\in\mathcal{C}}\Psi(\bm{w}) - \Psi(\bm{w}^*) =-\eta<^{\text{by Assumption \ref{ess}}}0. $$
This implies that, when $N$ is sufficiently large, we have $\frac{1}{N} (\log S_N -\log G_N(\bm{w}^*))<-\eta/2$, which further implies
\begin{equation*}
\begin{split}
\frac{s_N}{v_N} = \frac{s_N}{G_N(\bm{w}^*)e^{-N\eta/4}} = \left( e^{ \frac{1}{N}(\log S_N -\log G_N(\bm{w}^*))} \right)^Ne^{N\eta/4} \leq e^{-N\eta/4}. 
\end{split}
\end{equation*}
In sum, we have $\frac{s_N}{v_N} \in (0,e^{-N\eta/4})$ for sufficiently large $N$. By squeeze theorem, $\lim_{N\rightarrow \infty} \frac{s_N}{v_N}=0$, which is (1) in the lemma.

Next, we prove (2). On one hand,
\begin{equation*}
\begin{split}
&v_N=G_N(\bm{w}^*)e^{-N\eta/4} = \mathbb{E}[e^{Nf_{\bm{w}^*}(\bm{x})}]e^{-N\eta/4} \leq e^{N\Psi(\bm{w}^*)}e^{-N\eta/4}.
\end{split}
\end{equation*}
On the other hand, let $\epsilon:=\eta/16$. By (3) in Assumption \ref{ess}, there exists $\delta_{\epsilon}>0$ and $c_{\epsilon}\in(0,1)$ such that $\inf_{\|\bm{w}-\bm{w}^*\|<\delta_{\epsilon}}\mathbb{P}(f_{\bm{w}}(\bm{x})>\Psi(\bm{w})-\epsilon)>c_{\epsilon}$. Also, by the continuity of $\Psi$ in Assumption \ref{ess}, there exists $\delta_1>0$ such that $\Psi(\bm{w})>\Psi(\bm{w}^*)-\epsilon$ whenever $\|\bm{w}-\bm{w}^*\|<\delta_1$. Define $\delta'=\min\{\delta,\delta_{\epsilon},\delta_1\}$. Then, whenever $\|\bm{w}-\bm{w}^*\|<\delta'$,
\begin{equation*}
\begin{split}
& G_N(\bm{w}) = \mathbb{E}[e^{Nf_{\bm{w}}(\bm{x})}] \geq e^{N(\Psi(\bm{w})-\epsilon)}c_{\epsilon} \geq e^{N(\Psi(\bm{w}^*)-2\epsilon)}c_{\epsilon}=e^{N\Psi(\bm{w}^*)}e^{-N\eta/4)}e^{N\eta/8}c_{\epsilon}.
\end{split}
\end{equation*}
Therefore, 
$$  G_N(\bm{w}) \geq e^{N\Psi(\bm{w}^*)}e^{-N\eta/4)}e^{N\eta/8}c_{\epsilon}\geq v_N e^{N\eta/8}c_{\epsilon}.$$
When $e^{N\eta/8}c_{\epsilon}>1$ (i.e., $N>\frac{8}{\eta}\log c_{\epsilon}^{-1}$), $G_N(\bm{w}) >v_N$. This finishes the proof for (2) in the lemma.
\end{proof}

\subsubsection{Proof to Theorem \ref{main-convergence-thm}}
\textbf{Theorem }\ref{main-convergence-thm}
Suppose Assumption \ref{f-bound} and \ref{ess} hold. Denote $\bm{\mu}=[\mu_1,...,\mu_d]$ and $\bm{w}^*=[w_1^*,...,w_d^*]$. For any $\delta>0$, $\sigma>0$, and $M>0$, there exists a threshold $N_{\delta,\sigma,M}>0$ such that, whenever $N>N_{\delta,\sigma,M}$ and $\|\bm{\mu}\|\leq M$, for all $i\in\{1,2,...,d\}$ we have $\frac{\partial F_{N,\sigma}(\bm{\mu})}{\partial \mu_i}<0$ if $\mu_i>w_i^*+\delta$, and $\frac{\partial F_{N,\sigma}(\bm{\mu})}{\partial \mu_i}>0$ if $\mu_i<w_i^*-\delta$. 
\begin{proof}
We re-write $F_{N,\sigma}(\bm{\mu})$ in (\ref{surrogate}) as
\begin{equation}
\label{F-decompose}
F_{N,\sigma}(\bm{w}) = v_N (H_{N,,\sigma}(\bm{\mu}) + R_{N,,\sigma}(\bm{\mu})),
\end{equation}
where
\begin{equation*}
\begin{split}
\label{theorem-notations}
&G_N(\bm{w})=\mathbb{E}_{\bm{x}\sim \mathcal{D}}[e^{Nf_{\bm{w}}(\bm{x})}],\\
&H_{N,\sigma}(\bm{\mu}):=  (\sqrt{2\pi}\sigma)^{-d}  \int_{\bm{w}\in B(\bm{w}^*;\delta)} v^{-1}_N G_N(\bm{w})  e^{-\frac{\lVert \bm{w} - \bm{\mu} \rVert^2}{2\sigma^2}} d\bm{w},\\
&R_{N,\sigma}(\bm{\mu}):=   (\sqrt{2\pi}\sigma)^{-d} \int_{\bm{w}\notin B(\bm{w}^*;\delta)} v^{-1}_N G_N(\bm{w})  e^{-\frac{\lVert \bm{w} - \bm{\mu} \rVert^2}{2\sigma^2}} d\bm{w},\\
\end{split}
\end{equation*}
and $B(\bm{w}^*;\delta):=\{\bm{w}\in\mathbb{R}^d: \|\bm{w}-\bm{w}^*\|<\delta \}$. The core idea of the proof is to first establish that, for sufficiently large $N$, $\left| \frac{\partial H_{N,\sigma}(\bm{\mu})}{\partial\mu_i}\right|$ dominates $\left| \frac{\partial R_{N,\sigma}(\bm{\mu})}{\partial\mu_i}\right|$, and then to show that $\frac{\partial H_N(\bm{\mu},\sigma)}{\partial\mu_i}$ has the sign claimed in the theorem.

First, we bound $\left| \frac{\partial R_{N,\sigma}(\bm{\mu})}{\partial\mu_i}\right|$ from above. For any $\bm{\mu}\in \mathbb{R}^d$,
\begin{equation}
\begin{split}
\label{R-bound}
\left| \frac{\partial R_{N,\sigma}(\bm{\mu})}{\partial\mu_i}\right|&=\left| \frac{1}{(\sqrt{2\pi})^d\sigma^{d+2}}\int_{\bm{w}\notin B(\bm{w}^*;\delta)} (w_i - \mu_i) e^{-\frac{\lVert \bm{w} - \bm{\mu} \rVert^2}{2\sigma^2}} v_N^{-1} G_N(\bm{w}) d\bm{w} \right| \\
&\leq \frac{1}{(\sqrt{2\pi})^d\sigma^{d+2}}\int_{\bm{w}\notin B(\bm{w}^*;\delta)} |w_i - \mu_i| e^{-\frac{\lVert \bm{w} - \bm{\mu} \rVert^2}{2\sigma^2}} v_N^{-1} G_N(\bm{w}) d\bm{w} \\
&\leq \frac{1}{(\sqrt{2\pi})^d\sigma^{d+2}}\int_{\bm{w}\notin B(\bm{w}^*;\delta)} |w_i - \mu_i| e^{-\frac{\lVert \bm{w} - \bm{\mu} \rVert^2}{2\sigma^2}} v_N^{-1} s_N d\bm{w} \\
&\leq s_Nv_N^{-1} \left( \Pi_{j\neq i}  \frac{1}{\sqrt{2\pi}\sigma} \int_{w_j\in\mathbb{R}} 
e^{-\frac{(w_j - \mu_j )^2}{2\sigma^2}}  dw_j \right)  \cdot\frac{1}{\sqrt{2\pi}\sigma^{3}}\int_{w_i\in \mathbb{R}}  |w_i - \mu_i| e^{-\frac{(w_i - \mu_i)^2}{2\sigma^2}}  dw_i\\
&=s_Nv_N^{-1} \frac{1}{\sqrt{2\pi}\sigma^{3}}\int_{y\in \mathbb{R}} \sqrt{2}\sigma|y| e^{-y^2}  d(\sqrt{2}\sigma y), \quad y:=\frac{w_i-\mu_i}{\sqrt{2}\sigma},\\
&=s_Nv_N^{-1} \frac{\sqrt{2}}{\sqrt{\pi}\sigma}\cdot 2\int_{0}^{\infty}y e^{-y^2}  dy, \\
&=s_Nv_N^{-1} \frac{\sqrt{2}}{\sqrt{\pi}\sigma}\cdot \int_{0}^{\infty}  e^{-y^2}  d y^2,\\
&=s_Nv_N^{-1} \frac{\sqrt{2}}{\sqrt{\pi}\sigma}\cdot \int_{0}^{\infty}  e^{-z}  d z,\\
&=\frac{\sqrt{2}}{\sqrt{\pi}\sigma}\cdot\frac{s_N}{v_N}.
\end{split}
\end{equation}

Second, we bound $\left|\frac{\partial H_N(\bm{\mu},\sigma)}{\partial\mu_i}\right|$ from below when $\|\bm{\mu}\|\leq M$ and $|\mu_i-w_i^*|>\delta$. 
\begin{equation}
\begin{split}
\label{H-bound}
\left| \frac{\partial H_{N,\sigma}(\bm{\mu})}{\partial\mu_i} \right|
&= \frac{1}{(\sqrt{2\pi})^d\sigma^{d+2}}\int_{\bm{w}\in B(\bm{w}^*;\delta)} |w_i - \mu_i| v_N^{-1} G_N(\bm{w})   e^{-\frac{\lVert \bm{w} - \bm{\mu} \rVert^2}{2\sigma^2}} d\bm{w} \\
&\geq \frac{1}{(\sqrt{2\pi})^d\sigma^{d+2}}\int_{\bm{w}\in B(\bm{w}^*;\delta')} |w_i - \mu_i|   e^{-\frac{\lVert \bm{w} - \bm{\mu} \rVert^2}{2\sigma^2}} d\bm{w}, \quad\text{by Lemma \ref{sn-lemma} (2)}, \\
&\geq  \frac{1}{(\sqrt{2\pi})^d\sigma^{d+2}}\int_{\bm{w}\in B(\bm{w}^*;\delta')} (\delta-\delta') e^{-\frac{\lVert \bm{w} - \bm{\mu} \rVert^2}{2\sigma^2}}  d\bm{w}\\
&\geq   \frac{1}{(\sqrt{2\pi})^d\sigma^{d+2}}\int_{\bm{w}\in B(\bm{w}^*;\delta')} (\delta-\delta')e^{-\frac{M^2}{\sigma^2}}e^{-\frac{\|\bm{w}\|^2}{\sigma^2}} d\bm{w},\quad\|  \bm{w} - \bm{\mu}\|^2\leq 2(\| \bm{w} \|^2+ \|\bm{\mu}\|^2),\\
&\geq  (\delta-\delta') e^{-\frac{M^2}{\sigma^2}}  V(\delta',d,\sigma) 
\end{split}
\end{equation}
where $V(\delta',d,\sigma):=\frac{1}{(\sqrt{2\pi})^d\sigma^{d+2}} \int_{\bm{w}\in B(\bm{w}^*;\delta')} e^{-\frac{\|\bm{w}\|^2}{\sigma^2}} d\bm{w}$ and the third line is because of
$$ |w_i-\mu_i|=|(w_i-w_i^*)+(w_i^*-\mu_i)|\geq |w_i^*-\mu_i|-|w_i-w_i^*|>\delta-\delta'. $$

Third, we show the dominance of $\left| \frac{\partial H_{N,\sigma}(\bm{\mu})}{\partial\mu_i}\right|$ over $\left| \frac{\partial R_{N,\sigma}(\bm{\mu})}{\partial\mu_i}\right|$. From Lemma \ref{sn-lemma} (1), $s_N/v_N$ on the right-end of (\ref{R-bound}) approaches 0 as $N\rightarrow \infty$. Hence, for $\varepsilon:= (\delta-\delta') e^{-\frac{M^2}{\sigma^2}}  V(\delta',d,\sigma) \sqrt{\pi}\sigma/\sqrt{2}$, there exists a threshold $N_{\delta,d,M}>0$ such that, whenever $N>N_{\delta,d,M}$, we have $s_N/v_N < \varepsilon$, which further implies
\begin{equation} 
\label{H>R}
\frac{\sqrt{2}}{\sqrt{\pi}\sigma}\cdot\frac{s_N}{v_N}< (\delta-\delta') e^{-\frac{M^2}{\sigma^2}}  V(\delta',d,\sigma) \quad \Rightarrow^{\text{by (\ref{H-bound}),\ref{R-bound}}}\;\; \left| \frac{\partial R_{N,\sigma}(\bm{\mu})}{\partial\mu_i}\right|< \left| \frac{\partial H_{N,\sigma}(\bm{\mu})}{\partial\mu_i} \right|.
\end{equation}

Finally, we prove that $\frac{\partial F_{N,\sigma}(\bm{\mu})}{\partial\mu_i}$ has the sign claimed in the theorem. When $N>N_{\delta,\sigma,M}$, $\|\bm{\mu}\|\leq M$, and $\mu_i>w_i^*+\delta$,
\begin{equation}
\begin{split}
\label{g-bound1}
\frac{1}{v_N}\frac{\partial F_{N,\sigma}(\bm{\mu})}{\partial\mu_i} &= \frac{\partial H_{N,\sigma}(\bm{\mu})}{\partial\mu_i}+ \frac{\partial R_{N,\sigma}(\bm{\mu})}{\partial\mu_i} \\
&= -\left|\frac{\partial H_N(\bm{\mu},\sigma)}{\partial\mu_i}\right| + \frac{\partial R_{N,\sigma}(\bm{\mu})}{\partial\mu_i} \\
&<^{\text{by }(\ref{H>R})} -\left|\frac{\partial R_{N,\sigma}(\bm{\mu})}{\partial\mu_i}\right| + \left|\frac{\partial R_{N,\sigma}(\bm{\mu})}{\partial\mu_i} \right|\\
&= 0,
\end{split}
\end{equation}
where $\frac{\partial H_{N,\sigma}(\bm{\mu})}{\partial\mu_i}=-\left|\frac{\partial H_N(\bm{\mu},\sigma)}{\partial\mu_i}\right|$ because $\frac{\partial H_{N,\sigma}(\bm{\mu})}{\partial\mu_i}=\frac{1}{(\sqrt{2\pi})^d\sigma^{d+2}}\int_{\bm{w}\in B(\bm{w}^*;\delta)} (w_i - \mu_i) v_N^{-1}G_N(\bm{w}) e^{-\frac{\lVert \bm{w} - \bm{\mu} \rVert^2}{2\sigma^2}}    d\bm{w}$ and the integrand is always negative when $\bm{w}\in B(\bm{w}^*;\delta)$.

On the other hand, when $N>N_{\delta,\sigma,M}$, $\|\bm{\mu}\|\leq M$, and $\mu_i<w_i^*-\delta$,
\begin{equation}
\begin{split}
\label{g-bound2}
\frac{1}{v_N}\frac{\partial F_{N,\sigma}(\bm{\mu})}{\partial\mu_i} &= \frac{\partial H_{N,\sigma}(\bm{\mu})}{\partial\mu_i}+ \frac{\partial R_{N,\sigma}(\bm{\mu})}{\partial\mu_i} \\
&= \left|\frac{\partial H_N(\bm{\mu},\sigma)}{\partial\mu_i}\right| + \frac{\partial R_{N,\sigma}(\bm{\mu})}{\partial\mu_i} \\
&>^{\text{by }(\ref{H>R})} \left|\frac{\partial R_{N,\sigma}(\bm{\mu})}{\partial\mu_i}\right| - \left|\frac{\partial R_{N,\sigma}(\bm{\mu})}{\partial\mu_i} \right|\\
&= 0.
\end{split}
\end{equation}
Then, (\ref{g-bound1}) and (\ref{g-bound2}) imply the result in the theorem since $v_N>0$ for all positive $N$. 
\end{proof}

\end{document}